\pgfplotsset{compat=1.18}
\theoremstyle{definition}
\theoremstyle{remark}
\title*{The P\'olya-Szeg\"{o} principle in the fractional setting: a glimpse on nonlocal functional inequalities}
\titlerunning{The fractional P\'olya-Szeg\"o Principle}
\author{Alessandro Carbotti\orcidID{0000-0001-9394-8172}}
\institute{Alessandro Carbotti \at Dipartimento di Matematica e Fisica ``Ennio De Giorgi'', Università del Salento, Via per Arnesano, 73100, Lecce, Italy \email{alessandro.carbotti@unisalento.it}}
\begin{document}
	
	\maketitle
	
\abstract*{In this chapter we present the fractional Pólya–Szegő principle and its main consequences in
	the study of nonlocal functional inequalities. In particular, we show how
	symmetrization methods work also in the fractional setting and yield sharp results
	such as isoperimetric type inequalities. Further developments including stability
	issues and generalizations in the anisotropic and the Gaussian setting are also discussed.}
	
\abstract{In this chapter we present the fractional Pólya–Szegő principle and its main consequences in
	the study of nonlocal functional inequalities. In particular, we show how
	symmetrization methods work also in the fractional setting and yield sharp results
	such as isoperimetric type inequalities. Further developments including stability
	issues and generalizations in the anisotropic and the Gaussian setting are also discussed.}

	\section{Introduction}
	
	Fractional Sobolev spaces and nonlocal operators have gained significant attention in the last decades. One reason is their natural appearance in various fields, including probability theory, mathematical physics, harmonic analysis, and geometric measure theory. These nonlocal models provide more accurate descriptions of phenomena characterized by long-range interactions or anomalous diffusion, such as in Lévy processes, materials with memory, image processing, and finance, see e.g. \cite{BucVal16, CDV19, Garofalo1, GilOsh}.
	
	Functional inequalities play a pivotal role in understanding the qualitative and quantitative behaviour of solutions to partial differential equations. They provide essential tools for establishing existence, regularity, stability, and concentration properties of solutions, as well as driving the formulation of variational problems and numerical schemes.
	
	In the local framework functional inequalities have deep geometric content and powerful analytic consequences. Moving to the nonlocal setting many of the aforementioned features are inherited, though additional issues arise owing to long range interactions. In particular, the lack of locality affects the behaviour of nonlocal functionals under rearrangement.
	
	This chapter focuses on functional inequalities involving geometric quantities enjoying a variational characterization and derived using the \emph{P\'olya-Szeg\"{o} principle} based on the concept of symmetric decreasing rearrangement. 
	
	In the book \cite{PolyaSzego} by G. P\'olya and G. Szeg\"o, the authors prove that for every $1\le p<\infty$ and $u\in W^{1,p}(\mathbb{R}^N,[0,\infty))$ the symmetric decreasing rearrangement $u^*$ (see the following Definition \ref{def:symmdecrearfun}) also belongs to $W^{1,p}(\mathbb{R}^N,[0,\infty))$ and 
	\begin{equation}
		\label{eq:classicalps}
		\int_{\mathbb{R}^N}|\nabla u^*|^pdx\le\int_{\mathbb{R}^N}|\nabla u|^pdx.
	\end{equation}
	Rearrangement techniques allow one to reduce more involved variational problems to radial or one-dimensional ones, and this reduction may simplify the identification of extremal profiles and optimal constants in functional inequalities. 
	
The rest of the chapter is organized as follows: In Section \ref{sec:notprel} we introduce some notations and preliminary definitions. In Section \ref{sec:fracps} we state and prove in \ref{th:frpsi} the P\'olya-Szeg\"o principle. In Section \ref{sec:applications} we recall some of the most famous applications of the P\'olya-Szeg\"o inequality, namely
\begin{enumerate}
	\item The fractional Sobolev inequality in Subsection \ref{sec:fracsoboine} \\
	\item The fractional isoperimetric inequality in Subsection \ref{sec:isopine} \\
	\item The Faber-Krahn inequality for the fractional $p$-laplacian in Subsection \ref{sec:fracfk} \\
	\item A Talenti-type comparison for solutions to fractional equations in Subsection \ref{sec:talenti}.
	\end{enumerate}
In Section \ref{sec:pspviaext} we sketch an alternative proof of Theorem \ref{th:frpsi} in the case $p=2$ based on the Extension Techniques of \cite{CafSil} and partial symmetrization. In Section \ref{sec:stability} the problem of stability of the abovementioned inequalities is treated. In Section \ref{sec:anisotrop} some generalizations involving possibly anisotropic nonlocal functionals are taken into account. The chapter concludes with some final comments and open problems set in Section \ref{sec:openprob}.

	\section{Notations and Preliminaries}
	\label{sec:notprel}
	Let $s \in (0,1)$, $N\in\mathbb{N}$ and $1\le p<\infty$. The fractional Sobolev space $W^{s,p}(\mathbb{R}^N)$ is defined as the space of functions $u \in L^p(\mathbb{R}^N)$ such that the following quantity
	\begin{equation}
		\label{eq:fracseminorm}
		[u]_{W^{s,p}(\mathbb{R}^N)}:=\left(\int_{\mathbb{R}^N}\int_{\mathbb{R}^N} \frac{|u(x)-u(y)|^p}{|x-y|^{N+sp}}\,dx\,dy\right)^{1/p}
	\end{equation}
	is finite. We refer to $[\cdot]_{W^{s,p}(\mathbb{R}^N)}$ as \emph{Gagliardo-Slobodeckij seminorm}.
	
	It is easy to check that the space $W^{s,p}(\mathbb{R}^N)$ endowed with the norm
	
	\begin{equation}
		\label{eq:fracsobnorm}
		\left\|u\right\|_{W^{s,p}(\mathbb{R}^N)}=\left(\left\|u\right\|^p_{L^p(\mathbb{R}^N)}+[u]^p_{W^{s,p}(\mathbb{R}^N)}\right)^{1/p}
	\end{equation}
	is a Banach space. In particular, when $p=2$, $W^{s,2}(\mathbb{R}^N)$ with the norm \eqref{eq:fracsobnorm} is a Hilbert space which we will indicate in the sequel as $H^s(\mathbb{R}^N)$. It is worth mentioning that the Gagliardo seminorms rescaled respectively by a factor $s$ when $s\to 0^+$ or $1-s$ when $s\to 1^-$, approach in the limit the $L^p(\mathbb{R}^N)$ norm or the $W^{1,p}(\mathbb{R}^N)$ seminorm, both in the pointwise and in the $\Gamma$-convergence sense see \cite{BBM01, Davila02, MazSha, Ponce04}. We refer to \cite{DNPV12} for further details and properties.
	
	\subsection{The fractional laplacian}
	
	The prototype of nonlocal fractional operator is the fractional laplacian $(-\Delta)^s$ which is defined as the Fourier multiplier with symbol $|\cdot|^{2s}$, namely
	\begin{equation}
	\label{eq:fraclapfourier}
	(-\Delta)^su(x)=\mathcal{F}^{-1}\left(|\cdot|^{2s}\mathcal{F}(u)\right)(x),
	\end{equation}
	where $\mathcal{F}$ denotes the Fourier transform on $L^1(\mathbb{R}^N)\cap L^2(\mathbb{R}^N)$
	$$
	\mathcal{F}(u)(\xi)=\int_{\mathbb{R}^N}u(x)e^{-ix\cdot\xi}dx.
	$$
	The fractional laplacian enjoys many equivalent definitions on the whole of $\mathbb{R}^N$. We give here three among others which will be useful in the sequel
	\begin{enumerate}
		\item{\textbf{Fractional laplacian as integro-differential operator}}
		
		For $u\in H^s(\mathbb{R}^N)$, the fractional laplacian $(-\Delta)^s u(x)$ is given by
		\begin{equation}
			\label{eq:fraclapsingint}
			(-\Delta)^s u(x) = C_{N,s}\, \text{P.V.} \int_{\mathbb{R}^N} \frac{u(x)-u(y)}{|x-y|^{N+2s}}\,dy,
		\end{equation}
		where P.V. stands for the Cauchy principal value. Moreover, if $u\in H^s(\mathbb{R}^N)\cap C^2(B_R)$ for some $R>0$, the principal value may be dropped and the integral has to be intended in the Lebesgue sense.
				
		\begin{remark}
			The assumption $u\in H^s(\mathbb{R}^N)\cap C^2(B_R)$ can be relaxed by requiring that $u\in C^{0,2s+\varepsilon}(B_R)$ if $0<s<\frac12$ or $u\in C^{1,2s-1+\varepsilon}(B_R)$ for some $\varepsilon>0$ if $\frac12\le s<1$ and that $\frac{u}{1+|\cdot|^{N+2s}}\in L^1(\mathbb{R}^N)$ so that $(-\Delta)^s u(x)$ is still pointwise well-defined almost everywhere.
		\end{remark}
		
		\item{\textbf{Fractional laplacian via heat flow}}
		
		Given a sufficiently smooth function $u$ with a certain growth at infinity, the \emph{heat semigroup} $e^{t\Delta}$ applied to $u$ is given by $U(t,x)=e^{t\Delta}u:=G_t\ast u$ where, for every $t>0$, $G_t$ denotes the \emph{heat kernel}		
		\begin{equation}
			\label{eq:heatkernel}
			G_t(\cdot) := (4\pi t)^{-N/2} e^{-\frac{|\cdot|^2}{4t}}.
		\end{equation}
		It can be proved that $U$ solves the Cauchy problem
		$$
		\begin{cases*}
			U_t=\Delta U\quad\text{in}\quad(0,\infty)\times\mathbb{R}^N \\
			U(0,x)=u(x)\quad\text{in}\quad\mathbb{R}^N.
		\end{cases*}
		$$
		
		Since for every $\lambda>0$ and $s\in(0,1)$ the following identity holds true
		
		$$
			\lambda^s=\frac{1}{\Gamma(-s)}\int_0^\infty\frac{e^{-t\lambda}-1}{t^{s+1}}dt,
		$$
		where $\Gamma$ denotes the Euler's Gamma function, by means of functional calculus one can replace the positive number $\lambda$ with the positive operator $-\Delta$ and consider the fractional powers $(-\Delta)^s$ through the \emph{Bochner Subordination formula}
		\begin{equation}
			\label{eq:Bochner}
			(-\Delta)^su=\frac{1}{\Gamma(-s)}\int_0^\infty\frac{e^{t\Delta}u-u}{t^{s+1}}dt.
		\end{equation}
		We refer e.g. to the book \cite{MarSan} for further details on the definition of fractional powers of operators via functional calculus.
		
		\item{\textbf{Fractional laplacian via ``Harmonic'' extension}}
		
		The fractional laplacian enjoys the following characterization in terms of the Dirichlet-to-Neumann map associated to a divergence form local operator defined on the upper half-space $\mathbb{R}^{N+1}_+$ with degenerate or singular coefficients. Namely
		
		\begin{theorem}(\cite[Section 3]{CafSil})
			\label{th:cafsil}
			Let $u:\mathbb{R}^N\rightarrow\mathbb{R}$ and $U:\mathbb{R}^{N+1}_+ \rightarrow\mathbb{R}$ be the unique weak solution of the problem
			\begin{equation}
				\label{eq:divformeq}
				\begin{cases}
					\text{div}(y^{1-2s}\nabla U)=0\quad\text{in}\quad \mathbb{R}^{N+1}_+ \\
					U(x,0)=u(x)\quad\text{in}\quad\mathbb{R}^N.
				\end{cases}
			\end{equation}
			Then
			$$
			(-\Delta)^su(x)=-c_s\lim_{y\to 0^+}y^{1-2s}\partial_yU(x,y),
			$$
			and
			\begin{equation}
				\label{eq:extseminorm}
				[u]^2_{H^s(\mathbb{R}^N)}=c_s\inf_{\substack{U\in H^1_{\rm loc}(\mathbb{R}^{N+1}_+) \\ U\left(\cdot,0\right)=u}}\int_0^\infty y^{1-2s}dy\int_{\mathbb{R}^N}\left(|\nabla_xU|^2+|\partial_yU|^2\right)dx
			\end{equation}
		\end{theorem}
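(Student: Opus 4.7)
The plan is to combine weighted Sobolev theory with an explicit Fourier computation in the transverse variable $y$.

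\emph{Existence, uniqueness and variational character.} First I would note that $w(y)=y^{1-2s}$ lies in the Muckenhoupt class $A_2$ since $1-2s\in(-1,1)$, so the weighted space $H^1(\mathbb{R}^{N+1}_+; y^{1-2s}dx\,dy)$ admits a continuous trace onto $H^s(\mathbb{R}^N)$. The weighted Dirichlet integral in \eqref{eq:extseminorm} is then strictly convex and coercive on the affine class of competitors with trace $u$, so direct minimization yields a unique $U$, which is also the unique weak solution of \eqref{eq:divformeq}. Expanding $\mathcal{E}(U+W)=\mathcal{E}(U)+\mathcal{E}(W)$ on test $W$ with zero trace confirms that $U$ realizes the infimum.

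\emph{Explicit solution via Fourier.} For $u$ Schwartz I would take the partial Fourier transform in $x$, turning \eqref{eq:divformeq} into the ODE
\[
\partial_y(y^{1-2s}\partial_y \hat{U}) = y^{1-2s}|\xi|^2\hat{U},\qquad \hat{U}(\xi,0)=\hat{u}(\xi).
\]
The self-similar ansatz $\hat{U}(\xi,y)=\hat{u}(\xi)\varphi(y|\xi|)$ together with the change $r=y|\xi|$ reduces this to the modified Bessel equation $\varphi''+\frac{1-2s}{r}\varphi'-\varphi=0$, whose unique solution bounded as $r\to\infty$ and normalized by $\varphi(0)=1$ is $\varphi(r)=\frac{2^{1-s}}{\Gamma(s)}r^{s}K_s(r)$. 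Its expansion $\varphi(r)=1+\frac{\Gamma(-s)}{4^s\Gamma(s)}r^{2s}+O(r^2)$ near $r=0$ produces
\[
\lim_{y\to 0^+} y^{1-2s}\partial_y\hat{U}(\xi,y) = \frac{2s\,\Gamma(-s)}{4^s\Gamma(s)}|\xi|^{2s}\hat{u}(\xi) = -\frac{1}{c_s}|\xi|^{2s}\hat{u}(\xi),\qquad c_s=\frac{2^{2s-1}\Gamma(s)}{\Gamma(1-s)}.
\]
Fourier inversion, combined with \eqref{eq:fraclapfourier}, gives the pointwise identity $(-\Delta)^s u = -c_s \lim_{y\to 0^+} y^{1-2s}\partial_y U$. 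Multiplying by $u$ and invoking the weighted Green identity
\[
\int_0^\infty y^{1-2s}dy\int_{\mathbb{R}^N}|\nabla U|^2 dx = -\int_{\mathbb{R}^N} u(x)\lim_{y\to 0^+} y^{1-2s}\partial_y U(x,y)\,dx,
\]
together with Plancherel and the Fourier-multiplier representation of the Gagliardo seminorm, yields $c_s\,\mathcal{E}(U) = [u]^2_{H^s(\mathbb{R}^N)}$, which is \eqref{eq:extseminorm} since $U$ is the minimizer. Density of Schwartz functions in $H^s(\mathbb{R}^N)$ then extends both identities to the full space.

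\emph{Main obstacle.} The principal difficulty is the rigorous treatment of the degenerate boundary at $y=0$: one must show that $y^{1-2s}\partial_y U$ admits a meaningful trace on $\mathbb{R}^N\times\{0\}$, validate the weighted boundary integration by parts used in the Green identity, and track the multiplicative constants coherently so that $c_s$ above matches the normalization implicit in the Gagliardo seminorm \eqref{eq:fracseminorm} and the constant $C_{N,s}$ in \eqref{eq:fraclapsingint}. Once these issues are settled on Schwartz data, where the Fourier computation is unambiguous, the extension to arbitrary $u\in H^s(\mathbb{R}^N)$ follows by density through the $A_2$ weighted trace and extension inequalities.
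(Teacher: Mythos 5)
Note first that the chapter does not prove Theorem \ref{th:cafsil}: it is quoted verbatim from \cite[Section 3]{CafSil}, so there is no ``paper proof'' to compare against. Your sketch reconstructs the standard argument of that reference (and of Stinga--Torrea): existence, uniqueness and the Dirichlet principle from the $A_2$ weighted Sobolev/trace theory, then the partial Fourier transform in $x$ reducing \eqref{eq:divformeq} to the modified Bessel ODE $\varphi''+\frac{1-2s}{r}\varphi'-\varphi=0$, with $\varphi(r)=\frac{2^{1-s}}{\Gamma(s)}r^sK_s(r)$, whose small-$r$ expansion yields the Dirichlet-to-Neumann identity with $c_s=\frac{2^{2s-1}\Gamma(s)}{\Gamma(1-s)}$ (your use of $\Gamma(1-s)=-s\Gamma(-s)$ is consistent), and finally the weighted Green identity plus Plancherel for the energy identity; all of this is correct in substance. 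The one point to be careful about, which you partly flag yourself, is the constant bookkeeping in \eqref{eq:extseminorm}: your Green-identity computation gives $c_s\,\mathcal{E}(U)=\mathrm{const}\cdot\int_{\mathbb{R}^N}|\xi|^{2s}|\mathcal{F}u(\xi)|^2\,d\xi$, and passing from this multiplier expression to the Gagliardo seminorm \eqref{eq:fracseminorm} introduces the additional dimensional factor tied to $C_{N,s}$ in \eqref{eq:fraclapsingint} and to the chosen Fourier normalization; hence the literal value $c_s=\frac{2^{2s-1}\Gamma(s)}{\Gamma(1-s)}$ is correct for the Neumann-derivative identity but cannot simultaneously be the constant in \eqref{eq:extseminorm} unless it absorbs that factor (the chapter itself is loose on this, using the same symbol in both displays). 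Apart from making that normalization explicit, and from stating the infimum over the energy class $\{U:\mathcal{E}(U)<\infty,\ U(\cdot,0)=u\}$ rather than bare $H^1_{\rm loc}$, your route is the intended one and the density argument closing the proof is standard.
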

		\begin{remark}
			In the case $s=\frac12$, Problem \eqref{eq:divformeq} is the harmonic extension on the upper half-space, and the ``half''-laplacian $(-\Delta)^{1/2}$ is the inward normal derivative of the extension $U$ at $\{y=0\}$.
		\end{remark}
		
				\begin{remark}
			The presence of the constant $C_{N,s}:=\frac{4^s\Gamma\left(\frac N2+s\right)}{\pi^{N/2}\Gamma(1-s)}$ in \eqref{eq:fraclapsingint} makes consistent the equivalence among the several definition of $(-\Delta)^s$. See also \cite[Section 3]{BucVal16}.
			\end{remark}
			
		\begin{remark}
			It is worth noticing that, for $1<p<\infty$, one can also consider a nonlinear counterpart of $(-\Delta)^s$ given by the \emph{fractional $p$-laplacian}
			$$
			(-\Delta)^s_pu(x):=\text{P.V.}\int_{\mathbb{R}^N}\frac{|u(x)-u(y)|^{p-2}(u(x)-u(y))}{|x-y|^{N+sp}}dy.
			$$
			We refer to \cite{DGV} for analogous characterization of $(-\Delta)^s_p$ as in the rest of this Subsection.
		\end{remark}
	\end{enumerate}
	
	\begin{definition}[Schwarz symmetrization]
			We define the symmetric decreasing rearrangement (or \emph{Schwarz symmetral}) $\Omega^*$ of an open and bounded set $\Omega$ as the ball centered at the origin with the same measure of $\Omega$, namely 
	$$
	\Omega^*=\{x\in\mathbb{R}^N:\,\;\omega_N|x|^N<|\Omega|\},
	$$
	where $|\cdot|$ stands as usual for the $N$-dimensional Lebesgue measure of a measurable set and $\omega_N$ denotes the volume of the unit ball in $\mathbb{R}^N$.
	\end{definition}
	
		\begin{definition}[Symmetric decreasing rearrangement of a function]
		\label{def:symmdecrearfun}
		Let $u: \mathbb{R}^N \to [0,\infty)$ be a measurable function. We say that $u$ \emph{vanishes at infinity} if
		$$
		|\{u(x)>t\}|<\infty\quad\text{for every}\quad t>0.
		$$
		The \emph{symmetric decreasing rearrangement} $u^*:\mathbb{R}^N\rightarrow[0,\infty)$ of a function $u$ vanishing at infinity
		is defined through the \emph{layer-cake} representation formula as
		$$
		u^*(x)=\int_0^\infty\chi_{\{f(x)>t\}^*}dt.
		$$
		The \emph{distribution function} of $u$ is defined by
		\[
		\mu_u(t) := |\{ x \in \mathbb{R}^N : u(x) > t \}|, \quad t > 0,
		\]
		and $u^*$ is the unique radially symmetric decreasing function which is \emph{equimeasurable} with respect $u$, i.e.
		\[
		\mu_{u^*}(t) = \mu_u(t), \quad \text{for all } t > 0.
		\]
		More explicitly, there exists a function $u_0: [0, \infty) \to [0, \infty)$, decreasing and right-continuous, such that $u^*(x)=u_0(\omega_N|x|^N)$ and, for every $r>0$
		\[
		u_0(r) = \sup \left\{ t \ge 0 : \mu_u(t)>r \right\}.
		\]
\end{definition}

\begin{remark}
		Notice that $\Omega^*$ is the unique set such that $\chi_{\Omega^*}=(\chi_\Omega)^*$.
\end{remark}

\begin{figure}[h]
	\centering
	\includegraphics[width=1.55\textwidth]{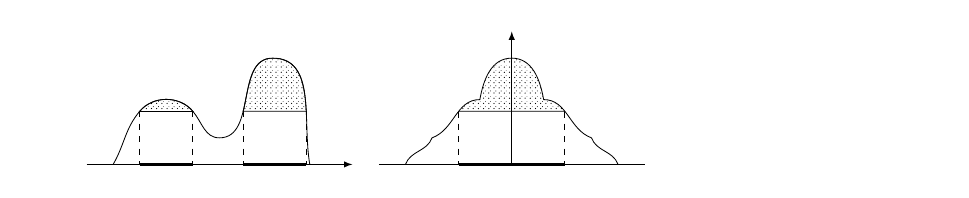}
		\caption[Schwarz rearrangement of a 1D function.]{Schwarz rearrangement of a 1D function.\footnotemark}
	\label{fig:FigureBC_Carbotti}
\end{figure}
\footnotetext{Image kindly provided by Andrea Gentile.}

Two remarkable results involving symmetric decreasing rearrangement are the following
\begin{lemma}[Rearrangement preserves $L^p$ norms]
	\label{lem:layer}
	Let $1\le p\le\infty$. For every non-negative $u\in L^p(\mathbb{R}^N)$ it holds that $u^*\in L^p(\mathbb{R}^N)$ with
	$$
	\left\|u^*\right\|_{L^p(\mathbb{R}^N)}=\left\|u\right\|_{L^p(\mathbb{R}^N)}.
	$$
	\end{lemma}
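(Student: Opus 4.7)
The plan is to reduce everything to equimeasurability $\mu_{u^*}=\mu_u$, combined with the layer-cake representation of the $L^p$ norm. By construction, $u^*$ is obtained by replacing each superlevel set $\{u>t\}$ with the ball of equal Lebesgue measure, so one expects $\{u^*>t\}$ to coincide, up to a null set, with $\{u>t\}^*$ for every $t>0$, whence the distribution functions of $u$ and $u^*$ agree.

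First I would check that $u^*$ is well defined in the sense of the preceding definition. For $u\in L^p(\mathbb{R}^N)$ with $1\le p<\infty$, Chebyshev's inequality $\mu_u(t)\le t^{-p}\left\|u\right\|^p_{L^p(\mathbb{R}^N)}$ gives $\mu_u(t)<\infty$ for every $t>0$, so $u$ vanishes at infinity. Using the explicit representation $u^*(x)=u_0(\omega_N|x|^N)$ together with the right-continuity of $\mu_u$ (itself a consequence of monotone convergence applied to the increasing family $\{u>t_n\}\uparrow\{u>t\}$ as $t_n\downarrow t$), a short monotonicity argument gives the equivalence $u_0(r)>t$ iff $\mu_u(t)>r$. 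Therefore $\{u^*>t\}=\{x\in\mathbb{R}^N:\omega_N|x|^N<\mu_u(t)\}=\{u>t\}^*$, which yields $\mu_{u^*}(t)=\mu_u(t)$ for every $t>0$.

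Once equimeasurability is available, the $L^p$ identity for $1\le p<\infty$ is a one-line application of Fubini-Tonelli to the non-negative integrand $(t,x)\mapsto pt^{p-1}\chi_{\{u>t\}}(x)$:
$$
\int_{\mathbb{R}^N}u^p(x)\,dx=\int_{\mathbb{R}^N}\int_0^{u(x)}pt^{p-1}\,dt\,dx=\int_0^\infty pt^{p-1}\mu_u(t)\,dt.
$$
The same chain of equalities holds with $u$ and $\mu_u$ replaced by $u^*$ and $\mu_{u^*}$, so equimeasurability forces
$$
\left\|u^*\right\|_{L^p(\mathbb{R}^N)}=\left\|u\right\|_{L^p(\mathbb{R}^N)},
$$
and in particular $u^*\in L^p(\mathbb{R}^N)$. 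The case $p=\infty$ follows at once from the formula $\left\|u\right\|_{L^\infty(\mathbb{R}^N)}=\inf\{t\ge 0:\mu_u(t)=0\}$, which depends only on the distribution function.

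I do not foresee any serious obstacle: the only technical point is the identity $\{u^*>t\}=\{u>t\}^*$, which is a straightforward consequence of the right-continuity of $\mu_u$ and of the characterization of $u_0$ via $\sup$ in the definition above.
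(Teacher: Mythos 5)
Your argument is correct: the paper states this lemma without proof (it is a classical fact), and your route---equimeasurability $\mu_{u^*}=\mu_u$ via $\{u^*>t\}=\{u>t\}^*$, obtained from the right-continuity of $\mu_u$ and the characterization of $u_0$, followed by the layer-cake identity $\int_{\mathbb{R}^N}u^p\,dx=\int_0^\infty pt^{p-1}\mu_u(t)\,dt$ and the distributional formula for the essential supremum when $p=\infty$---is exactly the standard proof. The only point worth flagging is the case $p=\infty$: membership in $L^\infty(\mathbb{R}^N)$ alone does not guarantee that $u$ vanishes at infinity (take $u\equiv 1$), so there $u^*$ is only defined under the additional implicit assumption that $|\{u>t\}|<\infty$ for all $t>0$; your Chebyshev step supplies this only for $p<\infty$, so a one-line remark acknowledging this hypothesis for $p=\infty$ would make the proof airtight.
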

	\begin{lemma}[Riesz Rearrangement inequality]
		\label{lem:rri}
		Let $f,g,h:\mathbb{R}^N\rightarrow[0,\infty)$ measurable functions. Then
		$$
		\int_{\mathbb{R}^N}f(x)g(x-y)h(y)dxdy\le\int_{\mathbb{R}^N}f^*(x)g^*(x-y)h^*(y)dxdy.
		$$
	\end{lemma}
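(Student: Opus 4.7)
The plan is to reduce to a set-level inequality via layer-cake, then to a one-dimensional problem via Steiner symmetrization, and finally pass to the Schwarz symmetral by iteration. To begin, write each of $f,g,h$ in layer-cake form $f(x)=\int_0^\infty\chi_{\{f>t\}}(x)\,dt$ and plug into the trilinear integral; Fubini then reduces the inequality to proving
$$
\mathcal{I}(A,B,C):=\int_{\mathbb{R}^N}\!\!\int_{\mathbb{R}^N}\chi_A(x)\chi_B(x-y)\chi_C(y)\,dx\,dy\,\le\,\mathcal{I}(A^*,B^*,C^*)
$$
for measurable sets $A,B,C$ of finite measure, since $\{f>t\}^*=\{f^*>t\}$ matches the set-level inequality to the function-level one after integration in $t$.

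The second reduction is Steiner symmetrization. For a hyperplane $H$ through the origin, the \emph{Steiner symmetrization} $A^H$ replaces, along each line $\ell\perp H$, the slice $A\cap\ell$ with the symmetric segment of equal length centered on $H\cap\ell$; it preserves Lebesgue measure. I would prove the monotonicity $\mathcal{I}(A,B,C)\le\mathcal{I}(A^H,B^H,C^H)$ by choosing coordinates so that $H=\{x_N=0\}$ and Fubini-slicing in the first $N-1$ variables, which reduces everything to the one-dimensional Riesz inequality on each family of parallel slices. In dimension one, Riesz's sliding argument handles the base case: approximate $A,B,C\subset\mathbb{R}$ by finite disjoint unions of intervals, perform a continuous measure-preserving deformation that pushes the component intervals toward the origin, verify directly that $\mathcal{I}$ is non-decreasing along this flow, and pass to the limit to reach centered symmetric intervals.

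Finally, I would iterate Steiner symmetrizations of $(A,B,C)$ along a countable dense family of hyperplane directions through the origin. By the classical convergence result of Brascamp-Lieb-Luttinger, the iterates of a set of finite measure converge in $L^1$ to its Schwarz symmetral, and the continuity of $\mathcal{I}$ under $L^1$-convergence of the characteristic functions then yields the stated inequality. The hard part will be the one-dimensional base case for general measurable sets, since the sliding deformation is transparent only for intervals and the measure-theoretic approximation requires care, together with the convergence of iterated Steiner symmetrizations to the spherical rearrangement, both classical but non-trivial points.
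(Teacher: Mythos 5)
The paper does not prove Lemma \ref{lem:rri} at all: it is quoted as a classical black box (Riesz's rearrangement inequality, in the generality of Brascamp--Lieb--Luttinger and of the book of Lieb and Loss) and then used as the key ingredient in the proof of Theorem \ref{th:frpsi}, so there is no internal argument to compare yours with. Your proposal reconstructs precisely the standard classical proof, and its skeleton is correct: layer-cake plus Fubini reduces the trilinear inequality to the set functional $\mathcal{I}(A,B,C)$ via $\{f>t\}^*=\{f^*>t\}$; Fubini-slicing perpendicular to a hyperplane reduces the Steiner monotonicity $\mathcal{I}(A,B,C)\le\mathcal{I}(A^H,B^H,C^H)$ to the one-dimensional case; Riesz's sliding argument on finite unions of intervals settles dimension one; and iterated Steiner symmetrizations then deliver the Schwarz symmetral, using continuity of $\mathcal{I}$ under $L^1$-convergence of characteristic functions of finite-measure sets. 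Two points deserve more care than your sketch gives them. First, the lemma as stated allows level sets of infinite measure, so you should either reduce to functions vanishing at infinity by truncation and monotone convergence (the only case in which the right-hand side is not trivially infinite, and the only case the paper ever uses), or say explicitly that both sides are $+\infty$ otherwise. Second, the final convergence step is more delicate than \enquote{take a countable dense family of directions}: you need one and the same sequence of Steiner symmetrizations to drive all three sets to balls simultaneously, and an arbitrary enumeration of a dense set of directions is not known to work without further structure; the classical fixes are either the set-adapted sequence constructed by Brascamp--Lieb--Luttinger together with their compactness argument, or a universal (e.g.\ random) sequence of directions as in later work of Mani-Levitska and van Schaftingen. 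You flag both the one-dimensional approximation and the convergence of iterates as the hard classical points, which is the right assessment; with those two steps quoted or carried out properly, your route is a complete and correct proof, whereas the paper simply buys brevity by citing the result.
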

	
	The Schwarz symmetrization is not the only rearrangement that allows for the P\'olya-Szeg\"o inequality \eqref{eq:classicalps}. We introduce the \emph{Steiner symmetrization} of a set
	
		\begin{definition}[Steiner symmetrization]
		Let $\Omega\subset\mathbb{R}^N$ be a measurable set and let $e \in \mathbb{S}^{N-1}$ be a fixed direction. 
		For each $x \in e^\perp$, consider the one-dimensional section
		\[
		\Omega_x := \{ t \in \mathbb{R} : x + te \in \Omega \}.
		\]
		The \emph{Steiner symmetral} of $\Omega$ with respect to the direction $e$ is the set
		\[
		\Omega^\sharp := \{ x + te : x \in e^\perp, \ |t| < \tfrac{1}{2} \, \mathcal{L}^1\left(\Omega_x\right) \},
		\]
		where $\mathcal{L}^1\left(\Omega_x\right)$ denotes the one-dimensional Lebesgue measure of $\Omega_x$. 
		
		In other words, each section $\Omega_x$ along the line parallel to $e$ is replaced by the centered interval of the same length, symmetric with respect to the hyperplane $e^\perp$.
	\end{definition}

	\begin{definition}[Steiner symmetrization of a function]
		\label{def:steiner}
		Let $f : \mathbb{R}^N \to [0,\infty)$ be a measurable function and let $e \in \mathbb{S}^{N-1}$ be a fixed direction. 
		For each $x \in e^\perp$, consider the one-dimensional function
		\[
		f_x(t) := f(x + te), \qquad t \in \mathbb{R}.
		\]
		We denote by $f_x^\sharp$ the one-dimensional symmetric decreasing rearrangement of $f_x$, i.e., the unique nonnegative, even, and nonincreasing function such that
		\[
		\mathcal{L}^1\left(\{ t \in \mathbb{R} : f_x(t) > \lambda \}\right) = \mathcal{L}^1\left(\{ t \in \mathbb{R} : f_x^\sharp(t) > \lambda \}\right) \quad \text{for all } \lambda > 0.
		\]
		The \emph{Steiner symmetrization} of $f$ with respect to the direction $e$ is then defined by
		\[
		f^\sharp(x + te) := f_x^\sharp(t), \qquad x \in e^\perp, \ t \in \mathbb{R}.
		\]
		
		In other words, along each line parallel to $e$, the function is replaced by its one-dimensional symmetric decreasing rearrangement centered with respect to the hyperplane $e^\perp$.
	\end{definition}
	
		\begin{remark}
		
		We highlight some analogies and differences between Steiner and Schwarz symmetrizations. Both the first and the second one aim to replace a given set or function with a more symmetric one, preserving the volume measure or the $L^p$-norm and proving the monotonicity of Dirichlet energies such as in \eqref{eq:psine} or for convolution-type integrals such as in Lemma \ref{lem:rri}. In the following we summarize the main differences

		\begin{center}
			\begin{tabular}{|p{5.0cm}|p{6.0cm}|p{4.5cm}|}
				\hline
				\textbf{} & \textbf{Steiner Symmetrization} & \textbf{Schwarz Symmetrization} \\
				\hline
				Construction for sets & For each line parallel to a fixed direction, replace the 1D section of $E$ by an interval centered on the hyperplane with the same measure. & Replace each section of $E$ by a ball centered at the origin having the same measure. \\
				\hline
				Construction for functions & For each line parallel to a fixed direction, replace the 1D level sets of $u$ by an interval centered on the hyperplane with the same measure. & Replace each level set of $u$ by a ball centered at the origin having the same measure. \\
				\hline
				Symmetry of rearranged functions & 1D even symmetry with respect to a chosen direction. & Radially symmetric. \\
				\hline
				Symmetry of rearranged sets & Axial symmetry with respect to a hyperplane. & Radial symmetry. \\
				\hline
				Iterative use & Repeated Steiner symmetrizations in many directions approximate Schwarz symmetrization. & Fully symmetric after one step. \\
				\hline
				Effect on convexity & Preserves convexity: if $E$ is convex, then its Steiner symmetral is convex. & Does not necessarily preserve convexity (except for balls). \\
				\hline
				Typical applications & Useful in anisotropic problems, or when convexity must be preserved. & Comparison between differential and variational problems with the radial symmetric counterpart. \\
				\hline
			\end{tabular}
		\end{center}
	\end{remark}
	
	\section{The P\'olya-Szeg\"o principle for Gagliardo fractional seminorms}
	\label{sec:fracps}
We are now in the position to state the main result of this chapter
	\begin{theorem}[Fractional P\'{o}lya--Szeg\"o Inequality]
		\label{th:frpsi}
		Let $1\le p<\infty$ and $u \in W^{s,p}(\mathbb{R}^N)$. Then $u^* \in W^{s,p}(\mathbb{R}^N)$ and
		\begin{equation}
			\label{eq:psine}
			[u^*]_{W^{s,p}(\mathbb{R}^N)}\le[u]_{W^{s,p}(\mathbb{R}^N)}.
		\end{equation}
		If $p=1$ then equality holds if and only if $u$ does not change sign and the level set $\{u>\tau\}$ is a ball for almost every $\tau>0$. If $p>1$ equality holds if and only if $u$ is the translate of a symmetric decreasing function.
	\end{theorem}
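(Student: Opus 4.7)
My plan is to establish the inequality via a Steiner symmetrization reduction, exploiting the fact that the Gagliardo kernel $|x-y|^{-N-sp}$ is radial and strictly decreasing. By truncation and mollification, together with the lower semicontinuity of $[\cdot]_{W^{s,p}(\mathbb{R}^N)}$ under $L^p$-convergence (Fatou) and the $L^p$-continuity of the rearrangement map, it suffices to prove \eqref{eq:psine} for $u$ non-negative, bounded, smooth, and compactly supported.

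Fix a direction $e \in \mathbb{S}^{N-1}$ and decompose $x = x' + te$, $y = y' + se$ with $x', y' \in e^\perp$. By Fubini,
\[
[u]_{W^{s,p}(\mathbb{R}^N)}^p = \int_{e^\perp \times e^\perp} \left( \int_{\mathbb{R}} \int_{\mathbb{R}} \frac{|u(x'+te) - u(y'+se)|^p}{(|x'-y'|^2 + (t-s)^2)^{(N+sp)/2}}\, dt\, ds \right) dx'\, dy'.
\]
For each fixed $(x',y')$, the 1D kernel $r \mapsto (|x'-y'|^2+r^2)^{-(N+sp)/2}$ is symmetric decreasing, so the core tool is the slicewise inequality
\[
\int_{\mathbb{R}} \int_{\mathbb{R}} |f(t)-g(s)|^p K(t-s)\, dt\, ds \;\ge\; \int_{\mathbb{R}} \int_{\mathbb{R}} |f^\sharp(t)-g^\sharp(s)|^p K(t-s)\, dt\, ds,
\]
valid for non-negative $f, g$ on $\mathbb{R}$ and symmetric decreasing $K$. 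This I would prove via the two-parameter layer-cake identity (for $p>1$)
\[
|a-b|^p = \frac{p(p-1)}{2} \int_0^\infty \!\!\int_0^\infty |\lambda-\mu|^{p-2}\, \bigl|\chi_{\{a>\lambda\}} - \chi_{\{b>\lambda\}}\bigr|\, \bigl|\chi_{\{a>\mu\}} - \chi_{\{b>\mu\}}\bigr|\, d\lambda\, d\mu,
\]
reducing the matter to indicators $f = \chi_A$, $g = \chi_B$. In that case $|\chi_A(t)-\chi_B(s)| = \chi_A(t) + \chi_B(s) - 2\chi_A(t)\chi_B(s)$: the first two terms are rearrangement invariant, while the bilinear term is controlled by Riesz's inequality (Lemma~\ref{lem:rri}). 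For $p = 1$ the single-parameter layer cake $|a-b| = \int_0^\infty|\chi_{\{a>\lambda\}} - \chi_{\{b>\lambda\}}|\,d\lambda$ makes the reduction direct.

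Applying the slicewise inequality pointwise in $(x',y')$ and integrating yields $[u^\sharp]_{W^{s,p}(\mathbb{R}^N)} \le [u]_{W^{s,p}(\mathbb{R}^N)}$ for every $e \in \mathbb{S}^{N-1}$. To pass from Steiner to Schwarz symmetrization, I iterate Steiner symmetrizations along a dense sequence of directions: by a classical construction (e.g.\ of Brock--Solynin or Burchard), the iterates converge to $u^*$ in $L^p$, and lower semicontinuity of the seminorm gives \eqref{eq:psine}. For the equality cases, strict convexity of $r\mapsto r^p$ when $p>1$ makes every slicewise inequality strict unless $u$ already coincides with its Steiner symmetrization in each direction; a Brothers--Ziemer-type rigidity argument then forces $u$ to be the translate of a radial decreasing function. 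When $p=1$, only the single-parameter layer cake is involved, and strictness fails precisely when Riesz's inequality is saturated on almost every superlevel set, i.e.\ when $\{u>\tau\}$ is a ball for a.e.\ $\tau>0$.

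The main technical obstacle is the slicewise inequality together with its equality analysis: while the layer-cake identity is elementary to verify, one must carefully track equality through the reduction to indicators and through the equality case of Riesz's inequality. The other delicate point is the passage from ``Steiner symmetric in every direction'' to ``radially symmetric up to translation'', which requires the Brothers--Ziemer-type rigidity in a nonlocal setting where the level-set structure of $u$ interacts with the long-range kernel.
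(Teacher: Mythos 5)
Your route is genuinely different from the chapter's proof (which replaces the singular kernel by Gaussians via the subordination formula \eqref{eq:heatrepr}, uses the Frank--Seiringer convexity decomposition and the Riesz inequality directly in $\mathbb{R}^N$ for the Schwarz rearrangement, and gets rigidity from Lieb's lemma), and for the inequality \eqref{eq:psine} itself the Steiner-slicing-plus-iteration scheme can be made to work. However, as written the key step has a gap: for $p>1$ the two-parameter layer-cake identity does \emph{not} ``reduce the matter to indicators''. After inserting it, the integrand is $|\chi_{A_\lambda}(t)-\chi_{B_\lambda}(s)|\,|\chi_{A_\mu}(t)-\chi_{B_\mu}(s)|$ with $A_\lambda=\{f>\lambda\}$, $B_\lambda=\{g>\lambda\}$, i.e.\ a product coupling two different levels, and the slicewise inequality for a single pair of indicators implies nothing about such products. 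The step can be repaired using the nestedness $A_\mu\subset A_\lambda$, $B_\mu\subset B_\lambda$ (for $\mu\ge\lambda$), which gives the pointwise identity $|\chi_{A_\lambda}(t)-\chi_{B_\lambda}(s)|\,|\chi_{A_\mu}(t)-\chi_{B_\mu}(s)|=\chi_{A_\mu}(t)\bigl(1-\chi_{B_\lambda}(s)\bigr)+\chi_{B_\mu}(s)\bigl(1-\chi_{A_\lambda}(t)\bigr)$, so that the inner integral splits into a rearrangement-invariant part $\|K\|_{L^1(\mathbb{R})}\bigl(|A_\mu|+|B_\mu|\bigr)$ minus two bilinear terms handled by Lemma \ref{lem:rri}; this is exactly the invariant-minus-bilinear structure of \eqref{eq:simme} in disguise, and it must be supplied, together with the finiteness needed to legitimize the subtraction. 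On that last point note that the one-dimensional kernel $r\mapsto(|x'-y'|^2+r^2)^{-(N+sp)/2}$ is not integrable when $x'=y'$ (harmless for $N\ge2$ since those pairs are negligible, but for $N=1$ the slicing degenerates entirely and you need, e.g., truncated kernels plus monotone convergence).

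The more serious gap is the equality case, which is where most of the work in this theorem lies. You reach \eqref{eq:psine} through two limiting procedures --- a density/mollification reduction and an infinite iteration of Steiner symmetrizations followed by lower semicontinuity --- and both destroy equality information: equality in \eqref{eq:psine} does not directly give equality at any Steiner step, let alone slicewise strictness for the smooth approximants you reduced to. A fix exists (once \eqref{eq:psine} is known for all admissible functions, $(u^\sharp)^*=u^*$ yields the sandwich $[u^*]\le[u^\sharp]\le[u]$, so equality forces equality for each single Steiner symmetrization of $u$ itself), but then the slicewise analysis must be run on the actual $u$, unbounded and non-smooth; this is precisely why the chapter needs the separate truncation decomposition \eqref{eq:A.7} to treat equality for unbounded functions, and your sketch has no counterpart. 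Moreover the rigidity tool is not a Brothers--Ziemer theorem (that concerns the local Pólya--Szegő inequality and the measure of critical sets, issues absent here) but the strict form of the Riesz inequality, i.e.\ Lieb's lemma \cite{lieb}, which forces the level sets of the slices to be concentric intervals; you must then show that the resulting centers are coherent across slices, levels and directions (the analogue of the $\tau$-independence argument \eqref{eq:indiptau}), and track the reduction from $u$ to $|u|$ so as to recover the sign information in the stated characterization. As it stands, the inequality part of your proposal is a viable alternative proof; the characterization of equality is not yet proved.
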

	\begin{proof}
		 Since $||u(x)|-|u(y)||\le |u(x)-u(y)|$ and equality for every $x,y\in\mathbb{R}^N$ holds if and only if $u\ge 0$ we reduce ourselves to prove the claim for non-negative functions.
		 Moreover, we assume for the moment that $u$ is also bounded. Let $G_t(\cdot)$ be the heat kernel \eqref{eq:heatkernel}.
		The fractional seminorm \eqref{eq:fracseminorm} can be represented (up to a multiplicative constant) as:
		\begin{equation}
			\label{eq:heatrepr}
			\begin{split}
			[u]_{W^{s,p}(\mathbb{R}^N)}^p &= C_{N,s,p} \int_0^\infty t^{-1 - \frac{sp}{2}} \int_{\mathbb{R}^N} \int_{\mathbb{R}^N} |u(x) - u(y)|^p G_t(x - y)\, dxdy\, dt \\
			&=C_{N,s,p} \int_0^\infty I_t[u]t^{ \frac{N+sp}{2}-1}dt,
			\end{split}
		\end{equation}
		where, for every $t>0$ we have set
		$$
		I_t[u]:=\int_{\mathbb{R}^N}\int_{\mathbb{R}^N}|u(x)-u(y)|^pe^{-t|x-y|^2}dxdy.
		$$
		
		Equality \eqref{eq:heatrepr} is obtained via Bochner subordination formula \eqref{eq:Bochner} since
		$$
		\frac{C_{N,s,p}}{|x-y|^{N+sp}}=\int_0^\infty G_t(x-y)t^{-1-\frac{sp}{2}}dt.
		$$
		
		We now prove that, for every $t>0$ rearrangement decreases the functional $I_t[u]$. We write $I_t[u]=I^+_t[u]+I^-_t[u]$, where
		
		$$
			I^{\pm}_t[u]:=C_{N,s,p}\int_{\mathbb{R}^N}\int_{\mathbb{R}^N}(u(x)-u(y))_{\pm}^{p}e^{-t|x-y|^2}dxdy,
		$$
		where, as usual
		$$
		(r)_+:=
		\begin{cases*}
			r\quad\text{if}\quad r\ge 0 \\
			0\quad\text{if}\quad r<0
		\end{cases*}
		$$
		and $(r)_-=(-r)_+$.
		We firstly argue on $I^+_t[u]$. Since the function $\mathbb{R}\ni z\mapsto J(z):=|z|^p$ is convex, so does $J_+$ and it has a non-negative and non-decreasing right derivative $J'_+$, so that we can write
		$$
		(u(x)-u(y))_+^p=\int_0^\infty J_+'(u(x)-\tau)\chi_{\{u\le \tau\}}(y)d\tau,
		$$
		that yields
		\begin{equation}
			\label{eq:Ipiu}
			I^+_t[u]=C_{N,s,p}\int_0^\infty\left(\int_{\mathbb{R}^N}\int_{\mathbb{R}^N}J'_+(u(x)-\tau)e^{-t|x-y|^2}\chi_{\{u\le\tau\}}(y)dxdy\right)d\tau.
		\end{equation}
		Since $u$ is bounded and $|\{u>\tau\}|<\infty$ for every $\tau>0$ we have that
		
		$$
		\int_{\mathbb{R}^N}J'_+(u(x)-\tau)dx<\infty,
		$$
		Therefore, denoting by $e^+_{t,\tau}[u]$ the inner integral in \eqref{eq:Ipiu} and using that $\chi_{\{u\le\tau\}}=1-\chi_{\{u>\tau\}}$, we write
		
		\begin{equation}
			\label{eq:simme}
			\begin{split}
			e^+_{t,\tau}[u]&=\left\|e^{-t|\cdot|^2}\right\|_{L^1(\mathbb{R}^N)}\int_{\mathbb{R}^N}J'_+(u(x)-\tau)dx \\
			&-\int_{\mathbb{R}^N}\int_{\mathbb{R}^N}J'_+(u(x)-\tau)e^{-t|x-y|^2}\chi_{\{u>\tau\}}(y)dxdy.
			\end{split}
		\end{equation}
		We notice that the rearrangement leaves invariant the first term in the right-hand side of \eqref{eq:simme} since $J'_+\left((u(x)-\tau)\right)^*=J'_+(u^*(x)-\tau)$, while the second term does not decrease thanks to Lemma \ref{lem:rri}. This proves that $e^+_{t,\tau}[u]\ge e^+_{t,\tau}[u^*]$ for almost every $\tau >0$ and for every $t>0$. Integrating in $\tau\in(0,\infty)$ 
		we conclude that
		$$
		I^+_t[u]\ge I^+_t[u^*]
		$$
		for every $t>0$. Arguing in the same way for $I^-_t[u]$ exchanging the roles of $x$ and $y$ in the definition of $e^+_{t,\tau}[u]$ and replacing $(\cdot)_+$ with $(\cdot)_-$, we conclude that
		
		\begin{equation}
			\label{eq:rearrangementIt}
			I_t[u]\ge I_t[u^*]
		\end{equation}
		for every $t>0$.
		
		Multiplying by $t^{\frac{N+sp}{2}-1}$, integrating in $t \in (0, \infty)$ and raising to the power $1/p$ both sides of \eqref{eq:rearrangementIt} we obtain:
		\[
		[u^*]_{W^{s,p}(\mathbb{R}^N)} \leq [u]_{W^{s,p}(\mathbb{R}^N)},
		\]
		which concludes the first part of the proof.
		
		Now, we characterize the case of equality in \eqref{eq:psine}.
		
		Assume that $I^\pm_t[u^*]=I^\pm_t[u]$ for some $u\in W^{s,p}(\mathbb{R}^N)\cap L^\infty(\mathbb{R}^N)$ and every $t>0$. By \eqref{eq:simme} we have that $e^\pm_{t,\tau}[u^*]=e^\pm_{t,\tau}[u]$ for every $t>0$ and almost every $\tau>0$. By Lieb's strict rearrangement inequality \cite[Lemma 3]{lieb}, we have that for a.e. $\tau>0$ there exists $a_\tau\in\mathbb{R}^N$ such that $\chi_{\{u<\tau\}}(x)=\chi_{\{u^*<\tau\}}(x-a_\tau)$ and
		$$
		J'_{\pm}(u(x)-\tau)=J'_{\pm}(u^*(x-a_\tau)-\tau).
		$$
		Now, if $p=1$ then $J_+(t)=t_+$ for every $t\in\mathbb{R}$ and this means that the superlevel sets $\{u>\tau\}$ are balls for almost every $\tau>0$. Otherwise if $p>1$ then $J_+$ is strictly convex in $(0,\infty)$ and so $J'_+$ is strictly increasing in $(0,\infty)$ which yields
		\begin{equation}
			\label{eq:indiptau}
			u(x)=u^*(x-a_\tau)
		\end{equation}
		for almost every $x\in\mathbb{R}^N$, $\tau>0$.
		Since the left-hand side in \eqref{eq:indiptau} does not depend on $\tau$ we have that $u$ is a translate of a symmetric decreasing function, and this concludes the proof for $u\in L^\infty(\mathbb{R}^N)$.
		
		Now we remove the assumption that $u$ is bounded, that is, we
		claim that \eqref{eq:rearrangementIt} holds for any non–negative $u$ with $I_t[u]$ and $|\{u>\tau\}|$ finite for almost every
		$\tau>0$ and every $t>0$. To see this, replace $u$ by $u_{M}=\min\{u,M\}$ and note that $(u_{M})^{*}=(u^{*})_{M}=:u^{*}_{M}$
		and $I_t[u_{M}]\le I_t[u]$. By monotone convergence the claim follows easily from the inequality
		\begin{equation}
			\label{eq:removebound}
			I_t[u_{M}]\ge I_t[u^{*}_{M}].
		\end{equation}

		Finally, we characterize the cases of equality for general $u$.
		Assume that $I_t^{+}[u]=I_t^{+}[u^{*}]$, for any $M>0$ we decompose
		\[
		u=u_{M}+v_{M},\qquad u_{M}:=\min\{u,M\},
		\]
		and find
		\begin{equation}
			\label{eq:A.7}
			I_t^{+}[u]=I_t^{+}[u_{M}]+I_t^{+}[v_{M}]
			+\iint_{\mathbb{R}^{N}\times\mathbb{R}^{N}}
			F_{M}\big(v_{M}(x),u_{M}(y)\big)e^{-t|x-y|^2}\,dx\,dy
		\end{equation}
		with
		\[
		F_{M}(v,u):=J_{+}(v+M-u)-J_{+}(v)-J_{+}(M-u).
		\]
		Note that since $J_{+}$ is convex with $J_{+}(0)=0$, one has $F_{M}(v,u)\ge0$ for $0\le u\le M$ and
		$v\ge0$. Hence all three terms on the right–hand side of \eqref{eq:A.7} are non–negative and finite.
		Note that replacing $u$ by $u^{*}$ amounts to replacing $u_{M}$ and $v_{M}$ by $u^{*}_{M}$ and
		$v^{*}_{M}$, respectively. Below we shall prove that the double integral in \eqref{eq:A.7} does not increase
		if both $u_{M}$ and $v_{M}$ are replaced by $u^{*}_{M}$ and $v^{*}_{M}$. Moreover, by \eqref{eq:removebound},
		$I_t^{+}[v_{M}]\ge I_t^{+}[v^{*}_{M}]$. Hence if $I_t^{+}[u]=I_t^{+}[u^{*}]$, then $I_t^{+}[u_{M}]=I_t^{+}[u^{*}_{M}]$
		for all $M>0$. Using \eqref{eq:indiptau} one easily concludes that $u$ is the translate of a symmetric decreasing function.
		
		It suffices to prove that the double integral in \eqref{eq:A.7} does not increase under rearrangement.
		Since $J'_{+}$ is increasing, we have $J'_{+}(t)=\int_{0}^{t} d\mu(\tau)$ for a non–negative measure
		$\mu$. Hence
		\[
		J_{+}(t)=\int_{0}^{\infty} (t-\tau)_{+}\,d\mu(\tau)
		\quad\text{and}\quad
		F_{M}(v,u)=\int_{0}^{\infty} f_{M,\tau}(v,u)\,d\mu(\tau),
		\]
		where
		\[
		f_{M,\tau}(v,u):=(v+M-u-\tau)_{+}-(v-\tau)_{+}-(M-u-\tau)_{+}.
		\]
		Since the integrand is non–negative for $0\le u\le M$ and $v\ge0$, it suffices to prove that for
		almost every $\tau$ the double integral
		\[
		\iint_{\mathbb{R}^{N}\times\mathbb{R}^{N}}
		f_{M,\tau}\big(v_{M}(x),u_{M}(y)\big)e^{-t|x-y|^2}\,dx\,dy
		\]
		does not increase under rearrangement. We decompose further
		$f_{M,\tau}=f^{(1)}_{M,\tau}-f^{(2)}_{M,\tau}$ where
		\[
		f^{(1)}_{M,\tau}(v):=v-(v-\tau)_{+}
		\]
		and
		\[
		f^{(2)}_{M,\tau}(v,u):=v-(v+M-u-\tau)_{+}+(M-u-\tau)_{+}
		=\min\{\,v,\,(u-M+\tau)_{+}\,\}.
		\]
		Since $f^{(1)}_{M,\tau}$ is bounded and the support of $v_{M}$ has finite measure, the integral
		\[
		\iint_{\mathbb{R}^{N}\times\mathbb{R}^{N}} f^{(1)}_{M,\tau}\big(v_{M}(x)\big)\,e^{-t|x-y|^2}\,dx\,dy
		=\left\|e^{-t|\cdot|^2}\right\|_{L^1(\mathbb{R}^N)}\int_{\mathbb{R}^{N}} f^{(1)}_{M,\tau}\big(v_{M}(x)\big)\,dx
		\]
		is finite and invariant under rearrangement of $v_{M}$ for every $t>0$. Finally, by Fubini Theorem we can write
		\begin{equation}
			\label{eq:charnotbound}
			\begin{split}
				&\iint_{\mathbb{R}^{N}\times\mathbb{R}^{N}} f^{(2)}_{M,\tau}\big(v_{M}(x),u_{M}(y)\big)e^{-t|x-y|^2}\,dx\,dy =\\
				&\int_{0}^{\infty}
				\Bigg(\iint_{\mathbb{R}^{N}\times\mathbb{R}^{N}}
				\chi_{\{v_{M}>\sigma\}}(x)\,e^{-t|x-y|^2}\,\chi_{\{(u_{M}-M+\tau)_{+}>\sigma\}}(y)\,dx\,dy\Bigg)\,d\sigma.
			\end{split}
		\end{equation}
		By Lemma \ref{lem:rri}, the right-hand side in \eqref{eq:charnotbound} does not decrease under rearrangement for almost every $\tau>0$ and for every $t>0$, which completes the proof.
		
	\end{proof}
	
	\begin{remark}
		In \cite[Theorem 1]{FraSei08}, the fractional P\'olya-Szeg\"o inequality is set for $1\le p<\frac Ns$ in the homogeneous fractional Sobolev space $\dot{W}^{s,p}(\mathbb{R}^N):=\overline{C^\infty_c(\mathbb{R}^N)}^{\left[\cdot\right]_{W^{s,p}(\mathbb{R}^N)}}=\left\{u\in L^{\frac{Np}{N-sp}}(\mathbb{R}^N)\,;\,[u]_{W^{s,p}(\mathbb{R}^N)}<\infty\right\}$.
	\end{remark}
	
	Inequality \eqref{eq:rearrangementIt} is a particular case of a more general result proved in \cite[Lemma A.2]{FraSei08}.
	
	\begin{lemma}
		Let $J:\mathbb{R}\rightarrow [0,\infty)$ be a convex function with $J(0)=0$, $k\in L^1(\mathbb{R}^N)$ be a symmetric decreasing function and 
		$$
		E[u]:=\int_{\mathbb{R}^N}\int_{\mathbb{R}^N}J(u(x)-u(y))K(x-y)dxdy.
		$$
		Then, for every non-negative and measurable $u$ with $E[u]$ and $|\{u>\tau\}|$ finite for every $\tau>0$ one has that
		\begin{equation}
			\label{eq:generalrearr}
			E[u]\ge E[u^*].
		\end{equation}
	\end{lemma}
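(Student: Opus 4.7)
The plan is to adapt the scheme of the proof of Theorem \ref{th:frpsi} almost verbatim, replacing $|z|^p$ by $J(z)$ and the Gaussian kernel $e^{-t|x-y|^2}$ by the general symmetric decreasing kernel $K(x-y)$. The three structural ingredients that powered the argument there, namely a non-negative and non-decreasing right-derivative of the one-sided part of the convex integrand, the symmetry and $L^1$-integrability of the kernel, and the Riesz rearrangement inequality of Lemma \ref{lem:rri}, all remain available in the present setting.

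First I would reduce to the case in which $u$ is bounded. Since $J$ is convex with $J\ge 0$ and $J(0)=0$, the functions $A(r):=J(r)\chi_{\{r\ge 0\}}$ and $B(r):=J(r)\chi_{\{r\le 0\}}$ are themselves convex on $\mathbb{R}$, non-negative, and their right-derivatives $A'$, $B'$ are non-decreasing, with $A'$ supported in $[0,\infty)$ and $B'$ in $(-\infty,0]$. Since $K$ is even one may write
\[
E[u]=\iint A\bigl(u(x)-u(y)\bigr)K(x-y)\,dx\,dy+\iint B\bigl(u(x)-u(y)\bigr)K(x-y)\,dx\,dy,
\]
and, after swapping $x$ and $y$, the $B$-integral takes the same form as the $A$-integral with the convex function $r\mapsto B(-r)$ in place of $A$. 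It therefore suffices to treat the $A$-piece. Following \eqref{eq:Ipiu}--\eqref{eq:simme}, I would use the layer-cake identity
\[
A\bigl(u(x)-u(y)\bigr)=\int_{0}^{\infty} A'\bigl(u(x)-\tau\bigr)\,\chi_{\{u(y)\le\tau\}}\,d\tau
\]
and the splitting $\chi_{\{u\le\tau\}}=1-\chi_{\{u>\tau\}}$ to rewrite the $A$-part of $E[u]$ as $\int_0^\infty e_\tau[u]\,d\tau$ with
\[
e_\tau[u]=\|K\|_{L^1(\mathbb{R}^N)}\int_{\mathbb{R}^N}A'\bigl(u(x)-\tau\bigr)\,dx-\iint A'\bigl(u(x)-\tau\bigr)K(x-y)\chi_{\{u>\tau\}}(y)\,dx\,dy.
\]

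Both terms on the right-hand side behave correctly under rearrangement. The first is invariant because the equimeasurability of $u$ and $u^*$, combined with the monotonicity of $r\mapsto A'(r-\tau)$, yields $\bigl(A'(u(\cdot)-\tau)\bigr)^*=A'(u^*(\cdot)-\tau)$ and hence equal $L^1$-norms; finiteness uses that $A'(u(\cdot)-\tau)$ is supported in $\{u>\tau\}$, which has finite measure by assumption, and is bounded when $u$ is. The second is a triple-product convolution integral against the symmetric decreasing kernel $K=K^*$, so it does not decrease under rearrangement by Lemma \ref{lem:rri}, together with the identities $(\chi_{\{u>\tau\}})^*=\chi_{\{u^*>\tau\}}$ and the equimeasurability statement for $A'(u(\cdot)-\tau)$ just recalled. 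Consequently $e_\tau[u]\ge e_\tau[u^*]$ for almost every $\tau>0$ and, integrating in $\tau$, the $A$-piece of $E$ satisfies the desired inequality; the same argument applied to $B$ gives \eqref{eq:generalrearr} for bounded $u$. To remove the boundedness assumption I would truncate $u_M:=\min\{u,M\}$, observe $(u_M)^*=(u^*)_M$, and use that $M\mapsto J(u_M(x)-u_M(y))$ is pointwise non-decreasing, since $|u_M(x)-u_M(y)|$ grows with $M$ and $J$ is non-decreasing on $[0,\infty)$ and non-increasing on $(-\infty,0]$ by convexity, $J\ge 0$, and $J(0)=0$; monotone convergence then passes $E[u_M]\ge E[u_M^*]$ to the limit. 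The main technical point I anticipate is the equimeasurability identity $\bigl(A'(u(\cdot)-\tau)\bigr)^*=A'(u^*(\cdot)-\tau)$, which needs some care because $A'$ is only the right-derivative of a convex function and may have jumps, but this is a standard consequence of the fact that composition with a non-decreasing right-continuous transformation commutes with symmetric decreasing rearrangement.
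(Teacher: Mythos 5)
Your proposal is correct and follows essentially the same scheme as the paper: the paper's proof of Theorem \ref{th:frpsi} (and the Frank--Seiringer argument it specializes) uses exactly this splitting of the convex integrand into its two monotone pieces, the layer-cake identity through the non-decreasing right derivative, the decomposition $\chi_{\{u\le\tau\}}=1-\chi_{\{u>\tau\}}$ together with the Riesz rearrangement inequality of Lemma \ref{lem:rri}, and the truncation $u_M=\min\{u,M\}$ with monotone convergence to remove boundedness. The only cosmetic remark is that $A'(u(\cdot)-\tau)$ is supported in $\{u\ge\tau\}$ rather than $\{u>\tau\}$ when $A'(0)>0$, which is harmless since that set also has finite measure for every $\tau>0$.
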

	
	\begin{remark}
		Inequality \eqref{eq:generalrearr} has been proved also in \cite[Theorem 9.2 (i)]{AlmLie} under the additional hypoteses of even symmetry of $J$ and finiteness of the quantity $\int_{\mathbb{R}^N}J(u(x))dx$. Clearly these hypoteses encompass also the $W^{s,p}(\mathbb{R}^N)$ seminorms thanks to the representation formula \eqref{eq:heatrepr}. The novelty of the proof given in \cite{FraSei08} is the \emph{rigidity}, namely, the characterization of the cases of equality in \eqref{eq:generalrearr}.
	\end{remark}

	\section{Some applications}
	\label{sec:applications}
	\subsection{Fractional Sobolev inequality}
	\label{sec:fracsoboine}
	Let $1\le p<\frac Ns$. The \emph{fractional Sobolev inequality} states that the continuous embedding
	\begin{equation}
		\label{eq:fracsobine}
	W^{s,p}(\mathbb{R}^N)\hookrightarrow L^q(\mathbb{R}^N)
	\end{equation}
	holds true for every $q\in[p,p^*_s]$, where $p^*_s:=\frac{Np}{N-sp}$ is the \emph{fractional Sobolev critical exponent}. In particular, in the critical case $q=p^*_s$ one can search for a maximizer of the Sobolev quotient
	\begin{equation}
	\label{eq:Sobquotient}
	\mathcal{S}_{s,p}(u):=\frac{\left\|u\right\|_{L^{p^*_s}(\mathbb{R}^N)}}{[u]_{W^{s,p}(\mathbb{R}^N)}}
	\end{equation}
	among all $u\in W^{s,p}(\mathbb{R}^N)\setminus\{0\}$. Through the P\'olya-Szeg\"o inequality one can reduce to the one-dimensional case searching for a radial maximizer to \eqref{eq:Sobquotient}. In particular, when $p=2$, the unique optimal profile up to translations and dilations is explicit and it is given by
	$$
	u_{\rm max,2}(x):=(1+|x|^2)^{-\frac{N-2s}{2}},
	$$
	see \cite[Theorem 1.1]{CotTav}, while for $p\ne 2$ it is only known that the optimal profile $u_{\rm max,p}$ is radially decreasing with $u_{\rm max,p}(x)\sim|x|^{-\frac{N-sp}{p-1}}$ as $|x|\to\infty$ (see \cite[Theorem 1.1.]{BraMosSqu}). It is worth noticing that for $s\to 1^-$ we recover the behaviour of the maximizers of the classical Sobolev quotient $\mathcal{S}_{1,p}(u)$ commonly known as \emph{Aubin-Talenti bubbles}.
	\subsection{Fractional Isoperimetric inequality}
	\label{sec:isopine}
	Nonlocal perimeters either of fractional type or depending on more general kernels have been widely studied in the last years, since they are related to nonlocal minimal surfaces \cite{CafRoqSav, MazRosTol}, phase transitions \cite{valdinoci}, fractal sets \cite{lombardini} and many other problems. The relevant isoperimetric inequalities of qualitative and quantitative type have been proved in \cite{CesNov, FraSei08} and \cite{FuMiMo, FiFuMaMiMo}, respectively. See also \cite{ComSte} where the authors introduce a notion of fractional perimeter using a distributional approach and \cite{DiNoRuVa} where an isoperimetric problem with the competition of two fractional perimeters of different order is studied.
	
	The $s$-fractional perimeter $P_s(E)$ of a measurable set $E \subset \mathbb{R}^N$ is defined as:
	\begin{equation}
		P_s(E) = \frac{[\chi_E]_{W^{s,1}(\mathbb{R}^N)}}{2} = \int_E\int_{E^c} \frac{dx dy}{|x-y|^{N+s}},
	\end{equation}
	in particular, $E$ is a set of finite $s$-fractional perimeter if and only if $\chi_E\in W^{s,1}(\mathbb{R}^N)$.
	
	The fractional P\'olya-Szeg\"o principle yields the following
	\begin{theorem}[Fractional Isoperimetric Inequality]
		Among all open and bounded sets, balls minimize the scale invariant isoperimetric ratio, i.e.
		\begin{equation}
			\label{eq:fracisoine}
			\frac{P_s(\Omega)}{|\Omega|^{\frac{N-s}{N}}}\ge \frac{P_s(B)}{|B|^{\frac{N-s}{N}}}.
		\end{equation}
		In particular, if $|\Omega|=|B|$ then
		$$
		P_s(\Omega) \ge P_s(B),
		$$
		and equality holds if and only if $\Omega$ is a ball.
	\end{theorem}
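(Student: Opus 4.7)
The plan is to deduce the fractional isoperimetric inequality directly from the fractional Pólya--Szegő inequality (Theorem~\ref{th:frpsi}) applied to characteristic functions in the case $p=1$, combined with a scaling argument to produce the scale-invariant form and with the rigidity part of Theorem~\ref{th:frpsi} to handle equality.

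First I would assume without loss of generality that $\Omega$ has finite $s$-perimeter, so that $u:=\chi_\Omega\in W^{s,1}(\mathbb{R}^N)$; otherwise the inequality is trivial. By definition of Schwarz symmetrization one has $(\chi_\Omega)^*=\chi_{\Omega^*}$, and from the identity
\[
P_s(E)=\tfrac{1}{2}[\chi_E]_{W^{s,1}(\mathbb{R}^N)}
\]
together with Theorem~\ref{th:frpsi} applied with $p=1$, I immediately obtain
\[
P_s(\Omega^*)=\tfrac{1}{2}[\chi_{\Omega^*}]_{W^{s,1}(\mathbb{R}^N)}\le \tfrac{1}{2}[\chi_\Omega]_{W^{s,1}(\mathbb{R}^N)}=P_s(\Omega),
\]
where $\Omega^*$ is the ball centered at the origin with $|\Omega^*|=|\Omega|$.

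Next I would upgrade this to the scale-invariant form. A direct change of variables in the double integral defining $P_s$ yields the homogeneity $P_s(\lambda E)=\lambda^{N-s}P_s(E)$ for every $\lambda>0$, while $|\lambda E|=\lambda^N|E|$, so the ratio $P_s(E)/|E|^{(N-s)/N}$ is invariant under dilations. For any ball $B$, choose $\lambda>0$ so that $|\lambda B|=|\Omega|=|\Omega^*|$; applying the previous step to $\Omega$ and rescaling gives
\[
\frac{P_s(\Omega)}{|\Omega|^{(N-s)/N}}\ge \frac{P_s(\Omega^*)}{|\Omega^*|^{(N-s)/N}}=\frac{P_s(B)}{|B|^{(N-s)/N}},
\]
which is exactly \eqref{eq:fracisoine}. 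Specializing to $|\Omega|=|B|$ yields $P_s(\Omega)\ge P_s(B)$.

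For the equality case I would invoke the rigidity statement of Theorem~\ref{th:frpsi} with $p=1$: equality in $[u^*]_{W^{s,1}}=[u]_{W^{s,1}}$ forces the superlevel sets $\{u>\tau\}$ to be balls for almost every $\tau>0$. Taking $u=\chi_\Omega$, the only nontrivial superlevel set is $\Omega$ itself (for $\tau\in(0,1)$), hence $\Omega$ must coincide, up to a Lebesgue-negligible set, with a ball; combined with $|\Omega|=|B|$ this shows that equality in \eqref{eq:fracisoine} forces $\Omega$ to be a ball. The only mildly delicate point is the equality discussion, since the $p=1$ rigidity in Theorem~\ref{th:frpsi} does not fix the center of symmetry; but this is immaterial because we only need $\Omega$ to be some ball, and no translation is required to conclude. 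The scaling identity for $P_s$ is a one-line change of variables and poses no real obstacle.
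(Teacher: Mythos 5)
Your proposal is correct and follows exactly the paper's route: apply Theorem~\ref{th:frpsi} with $p=1$ to $u=\chi_\Omega$, using $(\chi_\Omega)^*=\chi_{\Omega^*}$ and the identity $P_s(E)=\tfrac12[\chi_E]_{W^{s,1}(\mathbb{R}^N)}$. You merely spell out details the paper leaves implicit (the dilation homogeneity $P_s(\lambda E)=\lambda^{N-s}P_s(E)$ giving the scale-invariant form, and the $p=1$ rigidity identifying $\Omega$ as a ball in the equality case), all of which are sound.
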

	\begin{proof}
		It is sufficient to apply Theorem \ref{th:frpsi} with $u=\chi_\Omega$ and using that $\chi^*_\Omega=\chi_{\Omega^*}.$
	\end{proof}
	
	\begin{remark}
		Inequality \eqref{eq:fracisoine} can be restated by noticing that $P_s(B)=C|B|^{\frac{N-s}{N}}=C|\Omega|^{\frac{N-s}{N}}$ for some explicit $C=C(N,s)$, (see \cite[Proposition 1.1.]{Garofalo2} replacing $s$ with $\frac s2$). In particular, this yields that for every open set $\Omega$ with finite measure it holds
		$$
		P_s(\Omega)\ge C|\Omega|^{\frac{N-s}{N}}.
		$$
		This is also a remarkable application of the fractional Sobolev inequality \eqref{eq:fracsobine} to $\chi_\Omega$.
	\end{remark}
	
	\begin{remark}
		Since $|\chi_E(x)-\chi_E(y)|=|\chi_E(x)-\chi_E(y)|^p$ for every $x,y\in\mathbb{R}^N$ and $1\le p<\infty$ it is easy to check that
		$$
		P_s(E)=\frac12[\chi_E]^p_{W^{\frac sp,p}(\mathbb{R}^N)}.
		$$
	\end{remark}
	

	\subsection{Fractional Dirichlet eigenvalues and Faber-Krahn Inequality}
	\label{sec:fracfk}
	Let $1<p<\frac Ns$ and $\Omega$ be a bounded open set in $\mathbb{R}^N$. The Dirichlet eigenvalue problem for $(-\Delta)^s_p$ consists in finding the minimum value $\lambda$ such that the external value problem
	\begin{equation}
		\label{eq:firstdiregienfun}
		\begin{cases}
			(-\Delta)^s_pu=\lambda|u|^{p-2} u\quad\text{in}\quad\Omega \\
			u=0\quad\text{in}\quad\mathbb{R}^N\setminus\Omega
		\end{cases}
	\end{equation}
	admits a nonzero weak solution.
	
	Such a minimum admits a variational characterization in terms of the \emph{Rayleigh quotient}. We define
	$$
	W_0^{s,p}(\Omega):=\overline{C^\infty_c(\Omega)}^{\left\|\cdot\right\|_{W^{s,p}(\mathbb{R}^N)}}\subsetneq W^{s,p}(\mathbb{R}^N)
	$$
	and
	$
	\mathcal{R}_{\Omega,s,p}:W^{s,p}_0(\Omega)\rightarrow[0,\infty)
	$ defined as
	\begin{equation}
		\label{eq:rayleighquotient}
		\mathcal{R}_{\Omega,s,p}(u):=\frac{[u]^p_{W^{s,p}(\mathbb{R}^N)}}{\left\|u\right\|^p_{L^p(\mathbb{R}^N)}}
	\end{equation}
	The first Dirichlet eigenvalue for $(-\Delta)^s_p$ is therefore characterized in the following way
	\begin{equation}
		\lambda_1^{s,p}(\Omega) = \inf_{u \in W_0^{s,p}(\Omega)\setminus\{0\}} \mathcal{R}_{\Omega,s,p}(u).
	\end{equation}
	\begin{remark}
		Using Direct Methods of the calculus of variations it can be proved that the infimum is actually a minimum and it is attained in $u=u_\Omega$ where $u_\Omega$ is the first Dirichlet eigenfunction, i.e., the unique weak solution of \eqref{eq:firstdiregienfun}.
	\end{remark}
	\begin{remark}
		Notice that the right-hand side in \eqref{eq:rayleighquotient} actually depends on $\Omega$. Indeed, since $u=0$ in $\mathbb{R}^N\setminus\Omega$ we have that
		$$
		\mathcal{R}_{\Omega,s}(u)=\frac{\mathcal{I}_{\Omega,s,p}(u)}{\left\|u\right\|^p_{L^p(\Omega)}},
		$$
		where
		$$\mathcal{I}_{\Omega,s,p}(u):=\iint_{\mathcal{Q}_\Omega}\frac{|u(x)-u(y)|^p}{|x-y|^{N+sp}}dxdy$$
		and
		$
		\mathcal{Q}_\Omega:=(\Omega\times\Omega)\cup(\Omega\times\Omega^c)\cup(\Omega^c\times\Omega).
		$
	\end{remark}
	
	The fractional Faber-Krahn inequality goes as follows, see \cite[Theorem 3.5]{BraLinPar}:
	\begin{theorem}[Fractional Faber--Krahn Inequality]
		Among all open and bounded sets balls minimize the scale invariant functional:
		\begin{equation}
			|\Omega|^{\frac{sp}{N}}\lambda_1^{s,p}(\Omega) \ge |B|^{\frac{sp}{N}}\lambda_1^{s,p}(B).
		\end{equation}
		In particular, if $|B|=|\Omega|$ we have the Faber-Krahn inequality for the first Dirichlet eigenvalue of $(-\Delta)^s_p$
		$$
		\lambda_1^{s,p}(\Omega) \ge\lambda_1^{s,p}(B),
		$$
		and equality holds if and only if $\Omega$ is a ball.
	\end{theorem}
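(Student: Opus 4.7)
The plan is to combine the variational characterization of $\lambda_1^{s,p}(\Omega)$ with the fractional Pólya--Szegő inequality (Theorem \ref{th:frpsi}) applied to a first Dirichlet eigenfunction. Let $u_\Omega\in W_0^{s,p}(\Omega)\setminus\{0\}$ be a minimizer of the Rayleigh quotient $\mathcal{R}_{\Omega,s,p}$; replacing $u_\Omega$ by $|u_\Omega|$, which preserves the $L^p$ norm and does not increase the Gagliardo seminorm, I may assume $u_\Omega\ge 0$. I then consider the Schwarz symmetral $u_\Omega^*$. Since $u_\Omega$ is supported in $\Omega$, equimeasurability of the superlevel sets yields $\{u_\Omega^*>t\}\subseteq\Omega^*$ for every $t>0$, so $u_\Omega^*$ is supported in $\Omega^*$ and is a legitimate competitor for $\lambda_1^{s,p}(\Omega^*)$.

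By Lemma \ref{lem:layer} one has $\|u_\Omega^*\|_{L^p(\mathbb{R}^N)}=\|u_\Omega\|_{L^p(\mathbb{R}^N)}$, and by Theorem \ref{th:frpsi} one has $[u_\Omega^*]_{W^{s,p}(\mathbb{R}^N)}\le[u_\Omega]_{W^{s,p}(\mathbb{R}^N)}$. Chaining these gives
\[
\lambda_1^{s,p}(\Omega^*)\le\mathcal{R}_{\Omega^*,s,p}(u_\Omega^*)=\frac{[u_\Omega^*]_{W^{s,p}(\mathbb{R}^N)}^p}{\|u_\Omega^*\|_{L^p(\mathbb{R}^N)}^p}\le\frac{[u_\Omega]_{W^{s,p}(\mathbb{R}^N)}^p}{\|u_\Omega\|_{L^p(\mathbb{R}^N)}^p}=\lambda_1^{s,p}(\Omega),
\]
which is the equal-volume statement with $B=\Omega^*$. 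The scale-invariant version then follows from the homogeneity $\lambda_1^{s,p}(r\Omega)=r^{-sp}\lambda_1^{s,p}(\Omega)$, obtained by changing variables $x\mapsto rx$ in the Rayleigh quotient: combined with $|r\Omega|^{sp/N}=r^{sp}|\Omega|^{sp/N}$, the product $|\Omega|^{sp/N}\lambda_1^{s,p}(\Omega)$ is dilation invariant, so the inequality for equal volumes upgrades to the full scale-invariant statement.

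For rigidity, suppose $\lambda_1^{s,p}(\Omega)=\lambda_1^{s,p}(B)$ with $|\Omega|=|B|$. Then both inequalities in the chain above are equalities, and in particular $[u_\Omega^*]_{W^{s,p}(\mathbb{R}^N)}=[u_\Omega]_{W^{s,p}(\mathbb{R}^N)}$. Since $p>1$, the rigidity statement in Theorem \ref{th:frpsi} forces $u_\Omega$ to be a translate of a symmetric decreasing function on $\mathbb{R}^N$; hence its positivity set $\{u_\Omega>0\}$ is a (possibly unbounded) ball, and since $u_\Omega$ is compactly supported in $\Omega$ this ball has finite radius. The main obstacle lies in passing from the shape of the eigenfunction back to the shape of the domain: I must argue that $\{u_\Omega>0\}$ coincides (up to a null set) with $\Omega$. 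This is precisely the content of the strong positivity / minimum principle for first Dirichlet eigenfunctions of $(-\Delta)^s_p$ on (essentially) connected bounded open sets, which I would invoke from the nonlocal literature (e.g.\ \cite{BraLinPar}). Once strict positivity is in hand, the ball structure of $\{u_\Omega>0\}$ transfers to $\Omega$, concluding that $\Omega$ is a ball.
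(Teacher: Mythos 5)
Your proposal is correct and follows essentially the same route as the paper: symmetrize a nonnegative first eigenfunction, use equimeasurability (Lemma \ref{lem:layer}) together with the fractional P\'olya--Szeg\"o inequality (Theorem \ref{th:frpsi}) to compare Rayleigh quotients, and rescale to get the scale-invariant form. You in fact supply more detail than the paper's two-line argument (which loosely writes $u_\Omega^*=u_{\Omega^*}$ and delegates rigidity to \cite{BraLinPar}), notably the equality analysis via the $p>1$ rigidity in Theorem \ref{th:frpsi} combined with strict positivity of the first eigenfunction; the only point left implicit, as in the paper, is that $u_\Omega^*$ vanishing a.e.\ outside the ball indeed lies in $W_0^{s,p}(\Omega^*)$, a standard density fact for Lipschitz domains.
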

	
	\begin{proof}
		The proof follows by Theorem \ref{th:frpsi} by using that $\lambda_1^{s,p}(\Omega)=\mathcal{R}_{\Omega,s,p}(u_\Omega)=[u_\Omega]^p_{W^{s,p}(\mathbb{R}^N)}$, where $u_\Omega$ is the first Dirichlet eigenfunction with $L^p(\Omega)$ unit norm, and that $u^*_\Omega=u_{\Omega^*}$.
	\end{proof}
	
	\subsection{Talenti-type comparison for solutions to fractional equations}
	\label{sec:talenti}
	The P\'olya-Szeg\"o principle allows also in the fractional case for a comparison between a solution to a fractional equation and a solution to a corresponding symmetrized problem. For the sake of concreteness we state the Theorem in the model case of the fractional laplacian. We refer to \cite[Theorems 1.1, 1.2]{FerPisVol} where a broad class of fractional elliptic and parabolic operator is also treated.
	
	\begin{theorem}
		\label{th:masscompfpv}
		Let $\Omega\subset\mathbb{R}^N$ be an open and bounded set and $f\in L^2(\Omega)$, $f\ge 0$. If $u$ and $v$ are respectively solutions to
		$$
		\begin{cases*}
			(-\Delta)^su=f\quad\text{in}\quad\Omega \\
			u=0\quad\text{in}\quad\mathbb{R}^N\setminus\Omega
		\end{cases*}
		$$
		and
		$$
		\begin{cases*}
			(-\Delta)^sv=f^*\quad\text{in}\quad\Omega^* \\
			v=0\quad\text{in}\quad\mathbb{R}^N\setminus\Omega^*,
		\end{cases*}
		$$
		Then
		$$
		\int_{B_r}u^*(x)dx\le\int_{B_r}v(x)dx\quad\forall r>0
		$$
		and
		$$
		[u]_{H^s(\mathbb{R}^N)}\le[v]_{H^s(\mathbb{R}^N)}.
		$$
	\end{theorem}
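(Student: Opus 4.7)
The plan is to establish the pointwise mass comparison first and deduce the seminorm inequality as a corollary. The starting observation is the energy identity obtained by testing each equation against its own solution via the fractional integration by parts formula: up to a multiplicative constant that is the same on both sides,
\[
[u]^2_{H^s(\mathbb{R}^N)} \;\propto\; \int_\Omega f\,u\,dx, \qquad [v]^2_{H^s(\mathbb{R}^N)} \;\propto\; \int_{\Omega^*} f^*\,v\,dx.
\]
Once one knows that $\int f^* u^* \,dx \leq \int f^* v\,dx$, the seminorm inequality is immediate from Hardy-Littlewood.

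For the mass comparison I would follow the nonlocal Talenti scheme of \cite{FerPisVol}. Testing $(-\Delta)^s u = f$ against the Lipschitz truncation $\varphi_{t,h}(x):=\min\{(u(x)-t)_+/h,\,1\}$ and letting $h\to 0^+$, the right-hand side tends to $\int_{\{u>t\}} f\,dx$, which by Hardy-Littlewood and the equimeasurability $|\{u>t\}|=|\{u^*>t\}|$ is bounded by $\int_{\{u^*>t\}} f^*\,dx$. The left-hand side in the limit is a nonlocal coarea-type quantity attached to the level set $\{u>t\}$, which can be estimated from below via the fractional Pólya-Szegő inequality of Theorem \ref{th:frpsi} applied to $(u-t)_+$ (equivalently, the fractional isoperimetric inequality of Subsection \ref{sec:isopine}) by its counterpart computed on the centred ball $\{u>t\}^*$. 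Running the same computation on the symmetrised problem produces an equality rather than an inequality, because every super-level set of $v$ is already a centred ball. Comparing the two statements yields a pointwise differential inequality for the distribution functions $\mu_u(t)$ and $\mu_v(t)$, and integrating in $t$ followed by the standard change of variables converts it into the mass inequality $\int_{B_r} u^*\,dx \leq \int_{B_r} v\,dx$ for every $r>0$.

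Given the mass comparison, the seminorm part is short. Using the layer-cake decomposition $f^*=\int_0^\infty \chi_{\{f^*>\sigma\}}\,d\sigma$ and the fact that each super-level set of $f^*$ is a centred ball $B_{r(\sigma)}$, for any non-negative $w$ vanishing at infinity one has
\[
\int_{\mathbb{R}^N} f^*\,w\,dx \;=\; \int_0^\infty\!\!\int_{B_{r(\sigma)}} w(x)\,dx\,d\sigma.
\]
Applying this identity with $w=u^*$ and $w=v$ and invoking the mass comparison gives $\int f^* u^* \,dx \leq \int f^* v\,dx$. Combining with the Hardy-Littlewood bound $\int f u\,dx \leq \int f^* u^*\,dx$ and the two energy identities closes the proof.

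The main obstacle lies in the nonlocal coarea step at the heart of the mass comparison: unlike Talenti's original local scheme, the fractional Dirichlet form integrates pairs $(x,y)$ over all of $\mathbb{R}^N\times\mathbb{R}^N$, so the limit as $h\to 0^+$ does not reduce to a clean surface integral over $\{u=t\}$. Controlling the contribution of pairs separated by macroscopic distances, and showing that the resulting object is genuinely minimised by balls, is the technical core of \cite{FerPisVol}; this is also where the hypothesis $f\in L^2(\Omega)$ enters, to justify the limit passage in the weak formulation. An alternative route is to pass through the Caffarelli-Silvestre extension of Theorem \ref{th:cafsil} and adapt Talenti's local argument in the weighted half-space $(\mathbb{R}^{N+1}_+, y^{1-2s}\,dxdy)$, but this requires a weighted isoperimetric inequality and is equally nontrivial.
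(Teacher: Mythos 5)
The chapter itself does not prove Theorem \ref{th:masscompfpv}: it is quoted from \cite[Theorems 1.1, 1.2]{FerPisVol}, so your sketch has to be judged on its own merits. The soft part of your plan is correct and is indeed how the energy statement is normally deduced from the mass statement: testing each equation with its own solution gives $\tfrac{C_{N,s}}{2}[u]^2_{H^s(\mathbb{R}^N)}=\int_\Omega fu\,dx$ and $\tfrac{C_{N,s}}{2}[v]^2_{H^s(\mathbb{R}^N)}=\int_{\Omega^*} f^*v\,dx$; the layer-cake identity for $f^*$ (whose superlevel sets are centred balls) converts the concentration comparison $\int_{B_r}u^*\le\int_{B_r}v$ into $\int f^*u^*\le\int f^*v$; Hardy--Littlewood then closes the chain. (Two small points you should still record: the statement implicitly requires $f\ge0$ for $f^*$ and Hardy--Littlewood to apply, and the radial monotonicity of $v$ needs a justification.)

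The genuine gap is the mass comparison itself, and it is structural rather than technical. Testing with $\varphi_{t,h}$ and letting $h\to0^+$ does not produce a quantity ``attached to the level set $\{u>t\}$'': it produces
\[
\Theta_u(t)\;=\;2\int_{\{u>t\}}\int_{\{u\le t\}}\frac{u(x)-u(y)}{|x-y|^{N+2s}}\,dy\,dx ,
\]
which couples the level $t$ with the whole profile of $u$ (write $u(x)-u(y)=\int_0^\infty(\chi_{\{u>\tau\}}(x)-\chi_{\{u>\tau\}}(y))\,d\tau$ to see the interaction of \emph{all} superlevel sets with $\{u>t\}$). Consequently, neither Theorem \ref{th:frpsi} applied to $(u-t)_+$ nor the fractional isoperimetric inequality bounds $\Theta_u(t)$ from below by a function of $\mu_u(t)$ and $-\mu_u'(t)$; and, crucially, even for the radially decreasing solution $v$ the quantity $\Theta_v(t)$ is \emph{not} a function of $\mu_v(t)$ alone, so your claim that the symmetrized problem yields an equality at each level is false and no closed differential inequality for the distribution functions can be extracted. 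This is not an artifact of insufficient care with the $h\to0^+$ limit (so the role you assign to $f\in L^2$ does not rescue it): it is exactly the reason why in the nonlocal setting only the concentration comparison survives, while the pointwise bound $u^*\le v$ that a Talenti-type ODE for $\mu_u,\mu_v$ would deliver is known to fail in general. The proofs in the literature therefore avoid a level-by-level scheme on the Gagliardo form altogether: they either pass through the extension of Theorem \ref{th:cafsil}, Steiner/Schwarz-symmetrizing only in the horizontal variable and deriving a differential inequality for the concentration function of $U(\cdot,y)$ in the weighted half-space (the route you mention as an ``alternative'', in the spirit of Section \ref{sec:pspviaext}, \cite{brock}, \cite{BraCinVit}), or they work with integrated-in-$t$, semigroup/parabolic formulations as in \cite{FerPisVol}. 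As written, the central step of your argument would not go through.
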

		
	\section{P\'olya-Szeg\"o inequalities via partial symmetrization of extension functionals}
	\label{sec:pspviaext}
	We now show in the special case $p=2$ an alternative proof of Theorem \ref{th:frpsi} based on the characterization of the $H^s$-seminorm given by Theorem \ref{th:cafsil} and on the \emph{Partial Symmetrization} of the weighted Dirichlet integral in \eqref{eq:extseminorm}. We give here a brief sketch of the proof

	\begin{enumerate}
		\item We set
		\begin{equation}
		\label{eq:I1}
		I_1[U]:=\int_0^\infty y^{1-2s}\left(\int_{\mathbb{R}^N}|\nabla_x U(x,y)|^2dx\right)dy
		\end{equation}
		and
		\begin{equation}
		\label{eq:I2}
		I_2[U]:=\int_0^\infty y^{1-2s}\left(\int_{\mathbb{R}^N}|\partial_y U(x,y)|^2dx\right)dy
		\end{equation}
		where $U$ solves \eqref{eq:divformeq}.

		\item With a slight abuse of notation we indicate again with $U^*$ the Schwarz symmetrization of $U$ with respect the $x$ variable. Therefore the classical P\'olya-Szeg\"o inequality \eqref{eq:classicalps} yields that
		$$
		\int_{\mathbb{R}^N}|\nabla_xU^*(x,y)|^2dx\le\int_{\mathbb{R}^N}|\nabla_xU(x,y)|^2dx
		$$
		for every $y>0$. Multiplying by $y^{1-2s}$ and integrating in $(0,\infty)$ we obtain that $I_1[U^*]\le I_1[U]$.
		\item The inequality
		\begin{equation}
			\int_{\mathbb{R}^N}|\partial_y U^*(x,y)|^2dx\le\int_{\mathbb{R}^N}|\partial_y U(x,y)|^2dx
		\end{equation}
		is a consequence of \cite[Theorem 1]{brock}. Again, by multiplying by $y^{1-2s}$ and integrating in $(0,\infty)$ we obtain that $I_2[U^*]\le I_2[U]$.
		\item Let $V:\mathbb{R}^{N+1}_+\rightarrow [0,\infty)$ the unique weak solution of
		\begin{equation}
			\label{eq:divformeqsymm}
			\begin{cases}
				\text{div}(y^{1-2s}\nabla V)=0\quad\text{in}\quad \mathbb{R}^{N+1}_+ \\
				V(x,0)=u^*(x)\quad\text{in}\quad\mathbb{R}^N.
			\end{cases}
		\end{equation}
		It can be proved (see\cite[Proposition 3.2]{BraCinVit}) that $V$ has less energy than $U^*$, namely
		\begin{equation}
			\label{eq:masscon}
			I_1[V]+I_2[V]\le I_1[U^*]+I_2[U^*],
		\end{equation}
		where $U$ solves \eqref{eq:divformeq}. Therefore
		$$
		[u^*]^2_{H^s(\mathbb{R}^N)}=I_1[V]+I_2[V]\le I_1[U^*]+I_2[U^*] \le I_1[U]+I_2[U]=[u]^2_{H^s(\mathbb{R}^N)}
		$$
		and this proves Theorem \ref{th:frpsi}.
	\end{enumerate}
	
	\begin{remark}
		By replacing $s$ with $\frac s2$ and the boundary datum $u$ with the characteristic function of some open and bounded set $\Omega$ in \eqref{eq:extseminorm}, the following characterization of the fractional perimeter can be used
					\begin{equation}
			\label{eq:extperimeter}
			P_s(\Omega)=c_{\frac s2}\inf_{\substack{U\in H^1_{\rm loc}(\mathbb{R}^{N+1}_+) \\ U\left(\cdot,0\right)=\chi_\Omega}}\int_0^\infty y^{1-s}dy\int_{\mathbb{R}^N}\left(|\nabla_xU|^2+|\partial_yU|^2\right)dx
		\end{equation}
		and the fractional isoperimetric inequality through the classical P\'olya-Szeg\"o principle can also be achieved. The same representation formula holds for the first Dirichlet fractional eigenvalue just by using the first eigenfunction of $(-\Delta)^s$ as Dirichlet datum in the extension problem \eqref{eq:divformeq}.
	\end{remark}
	
		\section{Stability of some Geometric Inequalities in the Fractional Setting}
	\label{sec:stability}
	Quantitative versions of functional inequalities aim to measure their \emph{stability}, namely how far a given function or set is from attaining equality in the corresponding sharp inequality. In the fractional framework, such results are subtle due to the nonlocal nature of the underlying functionals. In this section, we review some noticeable results concerning sharp quantitative inequalities associated with nonlocal energies.
	
	Concerning the fractional isoperimetric inequality, a quantitative enhancement involves the Fraenkel asymmetry
	\[
	\mathcal{A}(\Omega) := \inf_{x \in \mathbb{R}^N} \frac{|\Omega \Delta B_r(x)|}{|\Omega|}, \quad \text{where } |B_r| = |\Omega|:
	\]
	given some geometric functional $\mathcal{F}$ and $m\in(0,\infty)$, once proved that the problem
	$$
	\min_{|\Omega|=m}\mathcal{F}(\Omega)
	$$
	admits a (unique) solution given by the ball, the goal is to prove that, under the measure constraint, the deficit $D(\Omega):=\mathcal{F}(\Omega)-\mathcal{F}(B)$ can be bounded from below by some positive function $g$ of the Fraenkel asymmetry, namely
	$$
	D(\Omega)\ge g(\mathcal{A}(\Omega)).
	$$ A very flexible technique in the local setting is the \emph{Selection Principle} introduced in \cite{CicLeo}, which provides a way to extract a sequence $(E_h)_h$ of nearly optimal sets for a quantitative geometric inequality from arbitrary minimizing sequences. Then, using compactness and regularity results, one shows that $(E_h)_h$ converges (after suitable translations) to a ball, in the sense that
	$$
	\mathcal{A}(E_h)\to 0\quad\text{and}\quad\frac{D(E_h)}{g\left(\mathcal{A}(E_h)\right)}\to\inf_{\Omega}\frac{D(\Omega)}{g\left(\mathcal{A}(\Omega)\right)},
	$$ and satisfies uniform estimates of the second variations, which leads to the desired inequality.
	 These tools have been adapted to the fractional perimeter in \cite{FiFuMaMiMo} obtaining the following quantitative form of \eqref{eq:fracisoine} 
	\begin{theorem}
		For every $N\ge 2$ and $s_0\in(0,1)$ there exists a positive constant $C_{N,s_0}>0$ such that
		\begin{equation}
			\label{eq:ffmmm}
			P_s(\Omega) \ge P_s(B) \left(\frac{|\Omega|}{|B|}\right)^{\frac{N-s}{N}}\left(1+\frac{\mathcal{A}(\Omega)^2}{c_{N,s_0}}\right),
		\end{equation}
		for every $s\in[s_0,1]$ and for every $0<|\Omega|<\infty$. In particular, if $|\Omega|=|B|$, then
		$$
		D_s(\Omega):=P_s(\Omega)-P_s(B)\ge\frac{P_s(B)}{C_{N,s_0}}\mathcal{A}(\Omega)^2.
		$$
	\end{theorem}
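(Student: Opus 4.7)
The plan is to use the Selection Principle of Cicalese-Leonardi, adapted to the nonlocal setting by Figalli-Fusco-Maggi-Millot-Morini, reducing the problem to a Fuglede-type spectral inequality on nearly spherical sets. By the scaling $P_s(tE)=t^{N-s}P_s(E)$ and the dilation invariance of $\mathcal{A}$, it suffices to prove
$$
P_s(\Omega)-P_s(B)\ge \frac{P_s(B)}{c_{N,s_0}}\mathcal{A}(\Omega)^2
$$
under the constraint $|\Omega|=|B|$. I argue by contradiction: suppose there is a sequence $\{\Omega_n\}$ with $\mathcal{A}(\Omega_n)\to 0$ (otherwise the inequality holds trivially with a universal constant in view of \eqref{eq:fracisoine}) and $D_s(\Omega_n)/\mathcal{A}(\Omega_n)^2\to 0$.

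The first step is to replace $\Omega_n$ with minimizers $E_n$ of a penalized functional of the form
$$
\mathcal{F}_n(E):=P_s(E)+\Lambda\bigl||E|-|B|\bigr|+\sqrt{\varepsilon_n^{2}+(\mathcal{A}(E)-\mathcal{A}(\Omega_n))^{2}},
$$
with $\Lambda$ large enough to play the role of a Lagrange multiplier and $\varepsilon_n\to 0$. Direct methods together with the lower semicontinuity of $P_s$ yield existence of $E_n$, and the penalization forces $\mathcal{A}(E_n)\to 0$ while the deficit-to-asymmetry ratio remains infinitesimal. The crucial point is that each $E_n$ is an $\omega$-minimizer of $P_s$; hence the regularity theory for nonlocal minimal surfaces (Caffarelli-Roquejoffre-Savin, Figalli-Valdinoci) furnishes uniform $C^{1,\alpha}$ bounds which, combined with $L^1$-convergence to $B$, force $\partial E_n\to\partial B$ in $C^{1,\alpha}$. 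Up to a translation realizing the asymmetry, $\partial E_n$ can then be written as a normal graph over $\partial B$ of a small height function $\psi_n\in C^{1,\alpha}(S^{N-1})$ whose zeroth and first spherical harmonic components are quantitatively controlled by the volume and barycenter constraints.

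The heart of the argument is then a nonlocal Fuglede-type inequality for nearly spherical sets: Taylor expansion of $P_s$ around the ball yields
$$
P_s(E_n)-P_s(B)\ge c_{N,s_0}\,\|\psi_n\|_{H^{(1+s)/2}(S^{N-1})}^{2},
$$
while the normalization of $\psi_n$ gives $\mathcal{A}(E_n)\lesssim \|\psi_n\|_{L^{1}(S^{N-1})}\lesssim \|\psi_n\|_{H^{(1+s)/2}(S^{N-1})}$. The lower bound is obtained by computing the second variation of $P_s$ at $B$, recognizing it as a nonlocal pseudo-differential operator on $S^{N-1}$ and diagonalizing it in spherical harmonics, which exhibits a spectral gap after removal of the modes of degree $\le 1$. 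Combining the two estimates contradicts the assumption $D_s(E_n)/\mathcal{A}(E_n)^{2}\to 0$ and transfers back to the original sequence $\Omega_n$ through the minimality property of $E_n$.

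The main obstacle is obtaining the Fuglede-type inequality with a constant $c_{N,s_0}$ which is uniform in $s$ on the interval $[s_0,1]$: this requires explicit control of the spectrum of the second variation operator of $P_s$ at the ball, and, at the endpoint $s=1$, a bridge to the classical Fuglede inequality through the Bourgain-Brezis-Mironescu/D\'avila-type convergence of the (suitably rescaled) fractional perimeter to the De Giorgi perimeter. A secondary technical point is ensuring that the $C^{1,\alpha}$-regularity estimates on the penalized minimizers $E_n$ are uniform in $s\in[s_0,1]$, which is built into the quasiminimality quantification inherited from the penalization.
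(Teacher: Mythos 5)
Your outline follows essentially the same route the paper relies on: the chapter does not prove this theorem itself but attributes it to \cite{FiFuMaMiMo}, whose argument is exactly the adaptation of the Selection Principle of \cite{CicLeo} that you describe (penalization, uniform regularity of almost-minimizers of $P_s$, reduction to nearly spherical graphs over $\partial B$, and a Fuglede-type second-variation estimate with constants uniform in $s\in[s_0,1]$ via the $s\to 1$ convergence to the classical perimeter). So your proposal matches the cited proof strategy and no further comparison is needed.
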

	
	\begin{remark}
		Some comments are in order. It can be proved via a counterexample that the power 2 in the right-hand side of \eqref{eq:ffmmm} is sharp. Nevertheless, the Selection principle does not allow to find an explicit constant $C_{N,s_0}$, though it is conjectured that $C_{N,S_0}\approx\frac{C'_N}{s_0}$ as $s_0\to 0^+$.
	\end{remark}
	
	An alternative proof for quantitative isoperimetric inequalities exploits the extension technique stated in Theorem \ref{th:cafsil}. In this framework, one can deduce a stability estimate basically using:
	\begin{itemize}
		\item the characterization of the $H^s(\mathbb{R}^N)$-seminorm \eqref{eq:extseminorm} \\
		\item the sharp quantitative isoperimetric inequality by Fusco-Maggi-Pratelli \cite{FuMaPr} \\
		\item  the so-called \emph{transfer of asymmetry} which allows to estimate from below the asymmetry of the superlevel sets $E_{t,y}:=\{x\in\mathbb{R}^N\,;U_\Omega(x,y)>t\}$ of the extension $U_\Omega$ of the eigenfunction $u_\Omega$ for suitable levels $t>0$ and suitable values of the extension variable $y\in(0,\infty)$ with a multiple of the asymmetry of $\Omega$.
	\end{itemize}
With this very flexible technique quantitative isoperimetric inqualities for the fractional perimeter in \cite{FuMiMo}, for semilinear fractional Dirichlet eigenvalues \cite{BraCinVit} and for the fractional 2-capacity \cite{CiOgRu} have been obtained. Though the proof exploits the sharp quantitative isoperimetric inequality, the rearrangement of those inequalities on some extension levels does not lead to the sharp power of the Fraenkel asymmetry.
	
		\section{Some generalizations}
		\label{sec:anisotrop}
		In this section we treat some generalizations when the relevant energy is not necessarily isotropic. Anisotropic energies reflect the influence of a non-Euclidean geometry on the variational structure of the problem. In this setting, symmetrization no longer leads to the euclidean ball as optimal shape but rather to Wulff shapes or other equivalent geometric structures, highlighting how the optimal shape inherits the underlying anisotropy.
		\subsection{The anisotropic case}
		The symmetrization of functionals of the form
		$$
		\int_{\Omega}A(x,H(\nabla u)) dx
		$$
		for some norm $H:\mathbb{R}^N\rightarrow[0,\infty)$ and some Young function $A:\Omega\times [0,\infty)\rightarrow [0,\infty)$ has been addressed by several authors. We refer e.g. to the book \cite{vanschaft}.
		
		A meaningful example is given by $A(x,H(\xi))=H_K(\xi)$ for some norm $H_K$ associated to a convex body $K\subset\mathbb{R}^N$.  
		Moving to the fractional framework, anisotropic fractional seminorms have been introduced in \cite{Ludwig}, while the corresponding P\'olya-Szeg\"o principle with respect to the Steiner symmetrization defined in Section \ref{sec:notprel} for the relevant $W^{s,p}$ seminorm
		$$
		[f]_{W_K^{s,p}(\mathbb{R}^N)}:=\left(\int_{\mathbb{R}^N}\int_{\mathbb{R}^N} \frac{|u(x)-u(y)|^p}{\left\|x-y\right\|_K^{N+sp}}\,dx\,dy\right)^{1/p}
		$$
		has been proved in \cite{Kreuml} deducing an isoperimetric inequality for the relevant anisotropic fractional perimeter with minimizer given by the \emph{Wulff shape} induced by $K$.
		\subsection{The Gaussian case}
		Another significant example is given by $A(x,H(\xi))=w(x)|\xi|^p$, for some $1\le p<\infty$ and some positive weight $w$. In particular for $w=\gamma:=\frac{e^{-\frac{|\cdot|^2}{2}}}{(2\pi)^{N/2}}$ we deal with the Gaussian rearrangement given by the \emph{Ehrhard symmetrization} introduced in \cite{EhrScand}. Through this symmetrization technique which rearranges a set in a halfspace keeping fixed its Gaussian volume, the same author proves the relevant P\'olya-Szeg\"o inequality \cite[Th\'eorème 3.1]{ehrhard} and deduces the isoperimetric inequality \cite[Proposition 2.1]{ehrhard} and the Faber-Krahn inequality \cite[Th\'eorème 4.4]{ehrhard}. See also \cite{BarBraJul, CCLP4} for quantitative estimates.
		
		In the nonlocal counterpart the notion of $H^s$-fractional Gaussian seminorm has been introduced in \cite{NovPalSir} in the setting of abstract Wiener spaces through an extension technique \emph{à la} Caffarelli-Silvestre proved in \cite{StiTor}, and the authors prove an isoperimetric inequality for the \emph{fractional Gaussian perimeter} $P^\gamma_s(E)$ using the characterization in formula \eqref{eq:extperimeter} replacing $(\mathbb{R}^N,\mathcal{L}^N)$ with a Wiener space $X$ equipped with the Gaussian measure $\gamma$. A stability result has also been obtained in \cite{CCLP4} in finite dimension.
		
		It is worth noticing that in the Gaussian framework the formulation with the extension technique is still equivalent with the one through Bochner subordination by considering the fractional powers of the (negative) \emph{Ornstein-Uhlenbeck operator} $-\Delta_\gamma$, where $\Delta_\gamma:=\Delta-\langle x,\nabla\rangle$ instead of the laplacian. See \cite{StiTor}. 
		
		The usefulness of this approach is manifold; comparison results as in Theorem \ref{th:masscompfpv} for solutions to equations driven by fractional powers of $-\Delta_\gamma$ can be proved (\cite{FeStVo}). Moreover, it simplifies the study of $W^{s,p}_\gamma$ fractional seminorms \emph{\'a la} Gagliardo or the analysis of the fractional Gaussian perimeter of a set in a bounded domain, see \cite{CCLP1, CCLP2}.
		
		Another, non-equivalent, definition of fractional perimeter in the Gauss space has been given in \cite{DL}. In this work has been proved that the only volume constrained isoperimetric sets are halfspaces passing through the origin, proving this way that for the given definition of Gaussian fractional perimeter the Ehrhard symmetrization may in general be not convenient.
	
	\section{Final comments and Open problems}
	\label{sec:openprob}
	Symmetrization methods are invaluable tools also in the fractional setting, allowing the derivation of sharp inequalities involving nonlocal geometric functionals. We conclude this chapter with some open problems
	\begin{enumerate}
		\item \textbf{Sharp quantitative inequalities}
		Many inequalities stated in Section \ref{sec:stability} are conjectured to be not sharp. To improve them the use of the Selection Principle as done in \cite{FiFuMaMiMo}, seems unavoidable. \\
		\item \textbf{P\'olya-Szeg\"o inequalities for Dirichlet integrals of the Riesz fractional gradient} In \cite[Section 4]{NguSqu} the authors set as open problem the validity or not of the inequality
			\begin{equation}
				\label{eq:psrfg}
			\int_{\mathbb{R}^N}|\nabla^s u^*|^pdx\le\int_{\mathbb{R}^N}|\nabla^s u|^pdx.
		\end{equation}
		being
		$$
		\nabla^su(x):=\mu_{N,s}\int_{\mathbb{R}^N}\frac{(u(x)-u(y))(x-y)}{|x-y|^{N+s+1}}dy
		$$
		the \emph{Riesz fractional gradient}, in the potential space $H^{s,p}(\mathbb{R}^N):=\{u\in L^p(\mathbb{R}^N):\,\nabla^su\in L^p(\mathbb{R}^N,\mathbb{R}^N)\}$. We notice that in general the equality
		$$
		\left\|\nabla^su\right\|_{L^p(\mathbb{R}^N,\mathbb{R}^N)}=C_{N,s,p}[u]_{W^{s,p}(\mathbb{R}^N)}
		$$ 
		holds only for $p=2$, and $H^{s,2}(\mathbb{R}^N)=H^s(\mathbb{R}^N)$. Then, for $p=2$ inequality \eqref{eq:psrfg} immediately follows from Theorem \ref{th:frpsi}, but for $p\ne 2$ neither the Schwarz symmetrization nor the Steiner symmetrization are not known to be useful. See e.g. \cite{ComSte} for further properties of $H^{s,p}(\mathbb{R}^N)$ and $\nabla^s$. \\
		\item \textbf{Fractional isoperimetric inequalities in the sub-Riemannian setting} 
		
		One of the most celebrated open problems in sub-Riemannian geometry concerns the identification of the optimal isoperimetric profile. In the Heisenberg group $\mathbb{H}^N$, this question is known as \emph{Pansu’s conjecture}, which remains open despite several partial results. In contrast, for $\alpha\ge 0$ the metric measure space $(\mathbb{R}^{N+1}, d,\mathcal{L}^{N+1})$ with $ds^2=|dx|^2+\frac{dy^2}{(\alpha+1)|x|^{2\alpha}}$ is commonly known as \emph{Grushin space} and has an explicit isoperimetric profile which has been determined in \cite{FraMon, MonMor}.
		
		A natural question is to define a \emph{fractional} analogue of the intrinsic distributional perimeter in Grushin spaces. A possible approach could rely on an extension procedure associated with the intrinsic laplacian given by the (negative) \emph{Grushin operator} $-\mathcal{G}_\alpha$, where
		\[
		\mathcal{G}_\alpha u := \Delta_x u + (\alpha+1)|x|^{2\alpha} \partial_{yy} u,
		\]
		and on an adaptation of the Caffarelli--Silvestre-type extension to deal with the energy functional
		\[
		\int_0^\infty z^{1-s}dz\int_{\mathbb{R}^{N+1}}|\nabla^\alpha_{x,y}U(x,y,z)|^2+|\partial_z U(x,y,z)|^2dxdy,
		\]
		where
		$$
		|\nabla^\alpha_{x,y}U(x,y,z)|^2=|\nabla_{x}U(x,y,z)|^2+(\alpha+1)^2|x|^{2\alpha}|\partial_{y}U(x,y,z)|^2.
		$$
		
		In \cite{beckner}, the author observes that the Grushin metric is conformal to the Poincaré one through a change of variables. When endowed with the Poincarè metric the upper-half space $\mathbb{R}^{N+1}_+$ is known to be a model of hyperbolic geometry. In this setting the \emph{hyperbolic symmetrization} rearranges the super-level sets of a function into metric balls keeping fixed the volume, so that it plays the role of the Schwarz symmetrization in the isotropic case and decreases the integral
		$$
		\int_{\mathbb{R}^{N+1}}|\nabla^\alpha_{x,y}U(x,y,z)|^2dxdy
		$$
		for every $z>0$, thus yielding that also the term
		$$
		J_1[U]:=\int_0^\infty z^{1-s}dz\int_{\mathbb{R}^{N+1}}|\nabla^\alpha_{x,y}U(x,y,z)|^2dxdy
		$$
		decreases applying the partial hyperbolic symmetrization with respect to the variable $(x,y)$ to $U$. However, it is not clear how to deal with the term
		$$
		J_2[U]:=\int_0^\infty z^{1-s}dz\int_{\mathbb{R}^{N+1}}|\partial_z U(x,y,z)|^2dxdy,
		$$
		for which an intrinsic version of the Steiner symmetrization (as in \cite{brock}) would be required.
		
		A rigorous implementation of this strategy could lead to a fractional isoperimetric inequality in the Grushin setting, yielding a different isoperimetric profile with respect to the one obtained for the classical distributional perimeter in \cite{FraMon, MonMor}.

	\end{enumerate}

	\begin{acknowledgement}
		The author kindly thanks Dr. Alberto Maione and Dr. Joaquim Duran for the opportunity of writing this book chapter. The author thanks also the Centre de Recerca Matematica for the very kind hospitality in March 2025.
		
		Most of the results in this chapter were the subject of a PhD course held by the author in January 2025 at the Dipartimento di Matematica e applicazioni ``Federico II'' of the University of Naples for which the author thanks Prof. Carlo Nitsch.
		
		The author also warmly thanks Prof. Diego Pallara and the anonymous referee for very useful suggestions in the revision of the chapter.
	\end{acknowledgement}
	\ethics{Funding informations}{The author is member of Gruppo Nazionale per l'Analisi Matematica, Probabilità e le loro Applicazioni of the Istituto Nazionale di Alta Matematica (INdAM-GNAMPA) and he acknowledges the support of the MUR - PRIN 2022 project ``Elliptic and parabolic problems, heat kernel estimates and spectral theory'' (N.~20223L2NWK) and of the INdAM - GNAMPA 2025 Project ``Metodi variazionali per problemi dipendenti da operatori frazionari isotropi e anisotropi'' (Grant Code: CUP\_E5324001950001). 
	The author has no conflicts of interest to declare that are relevant to the content of this chapter.}

\end{document}